\documentclass[11pt]{article}
\usepackage{amssymb,amsfonts}
\usepackage{amsmath,amsthm,amsxtra}

\usepackage{graphicx}
\usepackage{amscd}
\usepackage{ulem}
\usepackage{makeidx}
\usepackage{fancybox}

\usepackage[linktocpage=true, colorlinks=true, linkcolor=blue, citecolor=red, urlcolor=green]{hyperref}

\setlength{\oddsidemargin}{0.0 truein}
\setlength{\evensidemargin}{0.0 truein}
\setlength{\textwidth}{6.25 truein}
\setlength{\headheight}{0 pt}
\setlength{\headsep}{15 pt}
\setlength{\topmargin}{0.25 in}
\setlength{\textheight}{8.5 in}
\setlength{\hoffset}{0 in} 
\setlength{\voffset}{0 in}


\newcommand{\ccc}{{\mathbf C}}

\newcommand{\nnn}{{\mathbf N}}

\newcommand{\zzz}{{\mathbf Z}}

\newtheorem{thm}{Theorem}[section]
\newtheorem{prop}{Proposition}[section]
\newtheorem{lemma}{Lemma}[section]

\newtheorem{note}{Note}[section]

\numberwithin{equation}{section}

\pagestyle{myheadings}


\begin{document}

\title{Mock theta functions and indefinite modular forms III}

\author{\footnote{12-4 Karato-Rokkoudai, Kita-ku, Kobe 651-1334, 
Japan, \qquad
wakimoto.minoru.314@m.kyushu-u.ac.jp, \hspace{5mm}
wakimoto@r6.dion.ne.jp 
}{ Minoru Wakimoto}}

\date{\empty}

\maketitle

\begin{center}
Abstract
\end{center}

In this paper, we study modular transformation properties of 
a certain class of functions with indefinite quadratic forms.

\tableofcontents

\section{Introduction}

The research in this report was motivated by the observation that the 
functions 
$$
g^{[m]}_{n,m}(\tau) \, = \, 
\Big[\sum_{\substack{j, \, k \, \in \, \zzz \\[1mm]
0 \, < \, k \, \leq \, j}} 
-
\sum_{\substack{j, \, k \, \in \, \zzz \\[1mm]
j \, < \, k \, \leq \, 0}} \Big] 
(-1)^j \,
q^{(m+\frac12)(j+m-\frac12)^2 \, - \, m \, (k+m-\frac{n}{2m})^2} 
$$
can be written by the Jacobi's theta functions as 
$$
g^{[m]}_{n,m}(\tau)
\, = \,\ 
- \, \frac{(-1)^m}{2m} \cdot 
\frac{\eta(\tau)^3}{\theta_{n,m}(\tau,0)} 
\sum_{\ell \, \in \zzz/2m\zzz} 
e^{\frac{\pi in\ell}{m}} \,\ 
\dfrac{\theta_{m+\frac12, m+\frac12}^{(-)}\Big(\tau, \, -\dfrac{\ell}{m}\Big)
}{
\theta_{\frac12, \frac12}^{(-)}\Big(\tau, \, -\dfrac{\ell}{m}\Big)}
$$
when $m \in \nnn$ and $n$ is a non-negative integer satisfying $n < 2m$.
Then one will naturally ask
\begin{itemize}
\item What is the modular transformation of these functions ?
\item Do there exist $SL_2(\zzz)$-invariant families related to 
these functions ?
\end{itemize}
The aim of this paper is to give an answer to these questions.

\medskip

In this paper, we follow notations and definitions from 
\cite{W2022a}, 
\cite{W2022b}, 
\cite{W2022c}, 
\cite{W2022d}, 
\cite{W2022e},
\cite{W2022f} and 
\cite{W2023b}.

\section{Preliminaries}

Let us begin this section with recalling a very simple formula:

\begin{note} \,\ 
\label{indef3:note:2023-1120a}
Let $n$ be a positive integer and $\omega$ be an 
$n$-th primitive root of unity. Then the following formulas hold 
for a polynomial function $a(x)$ in $x$ of degree less than $n$ 
and a non-negative integer $k$ such that $k<n$. 
\begin{enumerate}
\item[{\rm 1)}] \quad $
\sum\limits_{j=0}^{n-1}\dfrac{a(\omega^{-j})}{1-\omega^jx}
\,\ = \,\ 
\dfrac{n \, a(x)}{1-x^n}$
\item[{\rm 2)}] \quad $\sum\limits_{j=0}^{n-1}
\dfrac{\omega^{-jk}}{1-\omega^j x} 
\, = \, \dfrac{n \, x^k}{1-x^n}$
\end{enumerate}
\end{note}

\begin{proof} 1) \, Since $
\dfrac{1}{1-\omega^jx}
\, = \, \dfrac{1}{1-x^n}
(1-x)(1-\omega x) \cdots 
(\widehat{1-\omega^jx}) \cdots (1-\omega^{n-1}x)$, one has
$$
\sum_{j=0}^{n-1}\frac{a(\omega^{-j})}{1-\omega^jx}
\, = \, \frac{1}{1-x^n}
\underbrace{\sum_{j=0}^{n-1}a(\omega^{-j}) \, 
(1-x)(1-\omega x) \cdots 
(\widehat{1-\omega^jx}) \cdots (1-\omega^{n-1}x)}_{\substack{ \hspace{6mm}
|| \,\ put \\[1mm] {\displaystyle f(x)
}}} 
$$
where the notation $\widehat{1-\omega^jx}$ with the hat on the top 
indicates to remove this factor $1-\omega^jx$.
Then $f$ is a polynomial of degree less than $n$ and satisfies the following: 
$$
f(\omega^{-j}) \,\ = \,\ a(\omega^{-j}) 
\underbrace{\prod_{\ell=1}^{n-1}(1-\omega^{\ell})}_{\substack{
|| \\[1mm] {\displaystyle n
}}}
= \,\ n \, a(\omega^{-j}) 
\quad {\rm for} \,\ 
j=0, \, \cdots \, , \, n-1
$$
Hence \, $f(x) \, = \, n \, a(x)$, proving 1). \, 
2) is obtained by applying 1) to $a(x) := x^k$.
\end{proof}

\medskip

We consider the mock theta function $\Phi^{(\pm)[m,s]}_1$ defined by 
\begin{equation}
\Phi^{(\pm)[m,s]}_1(\tau, z_1,z_2,0) \,\ := \,\ 
\sum_{j \in \zzz} \, (\pm 1)^j \, 
\frac{e^{2\pi imj(z_1+z_2)+2\pi isz_1} \, q^{mj^2+sj}
}{1-e^{2\pi iz_1}q^j}
\label{indef:eqn:2023-1120b}
\end{equation}
for $m \in \frac12 \nnn$ and $s \in \frac12 \zzz$. 
We write simply $\Phi^{[m,s]}_1$ for $\Phi^{(+)[m,s]}_1$.

\medskip

\begin{lemma} 
\label{indef3:lemma:2023-1122a}
Let $m \in \frac12 \nnn$ and $n \in \nnn$ and $p \in \zzz_{\geq 0}$
such that $p<n$. Then 
\begin{enumerate}
\item[{\rm 1)}] \,\ ${\displaystyle 
\sum_{k=0}^{n-1} e^{\frac{2\pi ipk}{n}}
\Phi^{[nm,0]}_1\Big(\tau, \, z_1 - \frac{k}{n}, \, 
z_2 \pm \frac{k}{n}, \, 0\Big)
= \, 
n \, \sum_{j \in \zzz} \, 
\frac{e^{2\pi inmj(z_1+z_2)+2\pi ipz_1}q^{nmj^2+pj}
}{1-e^{2\pi inz_1} \, q^{nj}}
}$
\item[{\rm 2)}] \,\ ${\displaystyle
\sum_{k=0}^{n-1} e^{-\frac{2\pi ipk}{n}}
\Phi^{[nm,0]}_1\Big(\tau, \, z_1 + \frac{k}{n}, \, 
z_2 \pm \frac{k}{n}, \, 0\Big)
= \, 
n \, \sum_{j \in \zzz} \, 
\dfrac{e^{2\pi inmj(z_1+z_2)+2\pi ipz_1}q^{nmj^2+pj}
}{1-e^{2\pi inz_1} \, q^{nj}}
}$
\end{enumerate}
\end{lemma}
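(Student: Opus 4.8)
The plan is to expand the definition (\ref{indef:eqn:2023-1120b}) of $\Phi^{[nm,0]}_1$ at the shifted arguments, interchange the two summations so that the sum over $k$ sits inside, and recognize the resulting inner sum as an instance of Note \ref{indef3:note:2023-1120a} 2). Writing $\omega = e^{2\pi i/n}$, I would substitute the shifts $z_1 \mapsto z_1 \mp \tfrac{k}{n}$ and $z_2 \mapsto z_2 \pm \tfrac{k}{n}$ into the series term by term.

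The key preliminary observation, and the one point that must be handled with care, is that the exponential prefactor $e^{2\pi inmj(z_1+z_2)}q^{nmj^2}$ sitting in the numerator is left unchanged by these shifts. Indeed, the shift alters the combination $z_1+z_2$ by an integer multiple of $\tfrac{k}{n}$ (by $0$ when the two signs are opposite, by $\pm\tfrac{2k}{n}$ when they agree), so the numerator acquires at worst a phase $e^{\pm 4\pi imjk} = e^{\pm 2\pi i(2m)jk}$. Since $m \in \tfrac12\nnn$ forces $2m \in \nnn$, we have $(2m)jk \in \zzz$ and this phase equals $1$. This is precisely what makes the two sign choices $z_2 \pm \tfrac{k}{n}$ produce the same right-hand side, and it is the main bookkeeping obstacle in the argument.

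With the numerator pulled outside the $k$-sum, only the denominator depends on $k$. Setting $x = e^{2\pi iz_1}q^j$, the inner sum in part 1) becomes $\sum_{k=0}^{n-1}\tfrac{\omega^{pk}}{1-\omega^{-k}x}$ and in part 2) becomes $\sum_{k=0}^{n-1}\tfrac{\omega^{-pk}}{1-\omega^{k}x}$. For part 2) I would apply Note \ref{indef3:note:2023-1120a} 2) directly with primitive root $\omega$ and exponent $p$, obtaining $\tfrac{n\,x^p}{1-x^n}$; for part 1) I would first reindex by the primitive root $\zeta = \omega^{-1}$, rewriting the sum as $\sum_{k=0}^{n-1}\tfrac{\zeta^{-pk}}{1-\zeta^{k}x}$, which again equals $\tfrac{n\,x^p}{1-x^n}$ by the Note. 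The hypothesis $0 \le p < n$ is exactly the range needed to invoke the Note.

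Finally I would substitute back $x^p = e^{2\pi ipz_1}q^{pj}$ and $x^n = e^{2\pi inz_1}q^{nj}$, reassemble with the untouched numerator $n\,e^{2\pi inmj(z_1+z_2)}q^{nmj^2}$, and restore the outer sum over $j$; this reproduces the claimed right-hand side in both parts and for both sign choices. Once the phase cancellation of the second paragraph is settled, the entire computation collapses to a single application of the elementary identity in Note \ref{indef3:note:2023-1120a}.
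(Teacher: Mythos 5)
Your proposal is correct and follows essentially the same route as the paper: expand the definition, observe that the numerator is invariant under the shifts because the phase $e^{\pm 4\pi imjk}=e^{\pm 2\pi i(2m)jk}$ is trivial for $2m\in\nnn$, interchange the sums, and apply Note \ref{indef3:note:2023-1120a} 2) to the inner $k$-sum with $x=e^{2\pi iz_1}q^j$. Your explicit reindexing by $\zeta=\omega^{-1}$ in part 1) is a small point the paper glosses over, but otherwise the two arguments coincide.
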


\begin{proof} 1) \,\ First we consider \, $z_2+\frac{k}{n}$. 
Then, by \eqref{indef:eqn:2023-1120b}, we have
\begin{subequations}
{\allowdisplaybreaks
\begin{eqnarray}
& & \hspace{-10mm}
\sum_{k=0}^{n-1} e^{\frac{2\pi ipk}{n}}
\Phi^{[nm,0]}_1\Big(\tau, \, z_1-\frac{k}{n}, \, z_2+\frac{k}{n}, \, 0\Big)
\,\ = \,\ 
\sum_{k=0}^{n-1} e^{\frac{2\pi ipk}{n}}
\sum_{j \in \zzz}
\frac{e^{2\pi inmj(z_1+z_2)}q^{nmj^2}
}{1-e^{2\pi i(z_1-\frac{k}{n})} \, q^j}
\label{indef:eqn:2023-1120a1}
\\[2mm]
& & \hspace{30mm}
= \,\ 
\sum_{j \in \zzz}e^{2\pi inmj(z_1+z_2)}q^{nmj^2} 
\sum_{k=0}^{n-1} \frac{(e^{\frac{2\pi ik}{n})^p}
}{1-e^{-\frac{2\pi ik}{n}} \, e^{2\pi iz_1}q^j}
\nonumber
\end{eqnarray}}
which is, by Note \ref{indef3:note:2023-1120a}, equal to 
\begin{equation}
= 
\sum_{j \in \zzz}e^{2\pi inmj(z_1+z_2)}q^{nmj^2} 
\frac{n  (e^{2\pi iz_1}q^j)^p }{1-(e^{2\pi iz_1}q^j)^{n}} 
= \, 
n  \sum_{j \in \zzz} \, 
\frac{e^{2\pi inmj(z_1+z_2)+2\pi ipz_1}q^{nmj^2+pj}
}{1-e^{2\pi inz_1} \, q^{nj}}
\label{indef:eqn:2023-1120a2}
\end{equation}
\end{subequations}
proving 1) for $z_2+\frac{k}{n}$. \,\ 
Next we consider the case  \, $z_2-\frac{k}{n}$. 
{\allowdisplaybreaks
\begin{eqnarray*}
& & \hspace{-10mm}
\sum_{k=0}^{n-1} e^{\frac{2\pi ipk}{n}}
\Phi^{[nm,0]}_1\Big(\tau, \, z_1-\frac{k}{n}, \, z_2-\frac{k}{n}, \, 0\Big)
\,\ = \,\ 
\sum_{k=0}^{n-1} e^{\frac{2\pi ipk}{n}}
\sum_{j \in \zzz}
\frac{e^{2\pi inmj(z_1+z_2-\frac{2k}{n})}q^{nmj^2}
}{1-e^{2\pi i(z_1-\frac{k}{n})} \, q^j}
\\[0mm]
&=&
\sum_{k=0}^{n-1} e^{\frac{2\pi ipk}{n}}
\sum_{j \in \zzz}
\frac{e^{2\pi inmj(z_1+z_2)}
\overbrace{e^{-4\pi ikmj}}^{1}
q^{nmj^2}
}{1-e^{2\pi i(z_1-\frac{k}{n})} \, q^j}
=
\sum_{k=0}^{n-1} e^{\frac{2\pi ipk}{n}}
\sum_{j \in \zzz}
\frac{e^{2\pi inmj(z_1+z_2)}q^{nmj^2}
}{1-e^{2\pi i(z_1-\frac{k}{n})} \, q^j}
\end{eqnarray*}}
which is the same with \eqref{indef:eqn:2023-1120a1}, so 
is equal to \eqref{indef:eqn:2023-1120a2}.

\medskip

The proof for the claim 2) is quite similar.
\end{proof}

\medskip

Changing the notation $(m,n) \rightarrow (\frac12, 2m)$ in the 
above Lemma \ref{indef3:lemma:2023-1122a} gives the following:

\medskip

\begin{lemma} 
\label{indef3:lemma:2023-1122b}
\label{note:2023-1111a}
For $m \in \frac12 \nnn$ and $p \in \zzz$ such that 
$0 \leq p < 2m$, the following formulas hold:
\begin{enumerate}
\item[{\rm 1)}] \,\ ${\displaystyle 
\sum_{k=0}^{2m-1} 
e^{\frac{\pi ipk}{m}}
\Phi^{[m,0]}_1\Big(\tau, z_1-\frac{k}{2m}, 
z_2 \pm \frac{k}{2m}, 0\Big)
= 
2m \sum_{j \in \zzz} 
\frac{e^{2\pi imj(z_1+z_2)+2\pi ipz_1}q^{mj^2+pj}
}{1-e^{4\pi imz_1} \, q^{2mj}}
}$
\item[{\rm 2)}] \,\ ${\displaystyle 
\sum_{k=0}^{2m-1} 
e^{-\frac{\pi ipk}{m}}
\Phi^{[m,0]}_1\Big(\tau, z_1+\frac{k}{2m}, 
z_2 \pm \frac{k}{2m}, 0\Big)
\, = \, 
2m \sum_{j \in \zzz} 
\frac{e^{2\pi imj(z_1+z_2)+2\pi ipz_1}q^{mj^2+pj}
}{1-e^{4\pi imz_1} \, q^{2mj}}
}$
\end{enumerate}
\end{lemma}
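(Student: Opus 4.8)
The plan is to obtain Lemma \ref{indef3:lemma:2023-1122b} as a direct specialization of Lemma \ref{indef3:lemma:2023-1122a}, since the two statements differ only by the relabeling announced in the text. To avoid the ambiguity caused by the letter $m$ appearing in two distinct roles, I would first restate Lemma \ref{indef3:lemma:2023-1122a} with fresh symbols, replacing its parameters $(m,n)$ by $(\mu,\nu)$, so that conclusion 1) reads
$$
\sum_{k=0}^{\nu-1} e^{\frac{2\pi ipk}{\nu}} \Phi^{[\nu\mu,0]}_1\Big(\tau, z_1 - \tfrac{k}{\nu}, z_2 \pm \tfrac{k}{\nu}, 0\Big) = \nu \sum_{j \in \zzz} \frac{e^{2\pi i\nu\mu j(z_1+z_2)+2\pi ipz_1} q^{\nu\mu j^2 + pj}}{1 - e^{2\pi i\nu z_1} q^{\nu j}},
$$
valid for $\mu \in \frac12 \nnn$, $\nu \in \nnn$ and $0 \le p < \nu$, with conclusion 2) the analogous statement carrying $e^{-\frac{2\pi ipk}{\nu}}$ and argument $z_1 + \frac{k}{\nu}$.

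Next I would perform the substitution $\mu = \frac12$ and $\nu = 2m$, where $m \in \frac12 \nnn$ is now the parameter of the target lemma. The hypotheses of Lemma \ref{indef3:lemma:2023-1122a} are then met: $\mu = \frac12 \in \frac12 \nnn$, and $\nu = 2m \in \nnn$ precisely because $m \in \frac12 \nnn$, while the range constraint $0 \le p < \nu$ becomes $0 \le p < 2m$, matching the target hypothesis. It then remains only to read off how each term transforms. The product $\nu\mu = 2m \cdot \frac12 = m$ turns $\Phi^{[\nu\mu,0]}_1$ into $\Phi^{[m,0]}_1$ and $q^{\nu\mu j^2}$ into $q^{mj^2}$; the shift $\frac{k}{\nu} = \frac{k}{2m}$ produces the arguments $z_1 \mp \frac{k}{2m}$ and $z_2 \pm \frac{k}{2m}$; the phase becomes $e^{\pm\frac{2\pi ipk}{\nu}} = e^{\pm\frac{\pi ipk}{m}}$; and on the right the prefactor $\nu$ becomes $2m$ while the denominator $1 - e^{2\pi i\nu z_1} q^{\nu j}$ becomes $1 - e^{4\pi imz_1} q^{2mj}$. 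Claim 1) of the target then follows verbatim, and claim 2) follows identically from the second conclusion of Lemma \ref{indef3:lemma:2023-1122a}.

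Since this is a pure substitution, there is no genuine analytic obstacle to overcome; the only point requiring care is the bookkeeping, namely keeping the specialized value $\mu = \frac12$ distinct from the new parameter $m$ that enters through $\nu = 2m$. Introducing the auxiliary names $(\mu,\nu)$ as above eliminates this danger and renders the verification entirely mechanical.
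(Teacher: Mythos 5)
Your proposal is correct and is exactly the paper's own route: the text explicitly obtains Lemma \ref{indef3:lemma:2023-1122b} by the substitution $(m,n)\to(\tfrac12,2m)$ in Lemma \ref{indef3:lemma:2023-1122a}, which is the specialization $\mu=\tfrac12$, $\nu=2m$ you carry out. Your introduction of fresh symbols $(\mu,\nu)$ to avoid the clash of the letter $m$ is a sensible bookkeeping precaution but does not change the argument.
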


\medskip

The folowing Lemma \ref{indef3:lemma:2023-1122c} is obtained 
by similar calculation with the proof of Lemma 2.1 in \cite{W2022b}.

\medskip

\begin{lemma} \quad 
\label{indef3:lemma:2023-1122c}
Let $m \in \frac12 \nnn$, $s \in \frac12 \zzz$ and $n \in \nnn$. Then
\begin{enumerate}
\item[{\rm 1)}] \,\ $\Phi^{(\pm)[m,s+n]}_1(\tau, z_1, z_2,0) \,\ = \,\ 
\Phi^{(\pm)[m,s]}_1(\tau, z_1, z_2,0)$
$$
- \,\ \sum_{j=0}^{n-1} \, 
e^{\pi i(s+j)(z_1-z_2)} \, 
q^{-\frac{1}{4m}(s+j)^2} \, 
\theta_{s+j, m}^{(\pm)}(\tau, \, z_1+z_2)
$$
\item[{\rm 2)}] \,\ $\Phi^{(\pm)[m,s-n]}_1(\tau, z_1, z_2,0) \,\ = \,\ 
\Phi^{(\pm)[m,s]}_1(\tau, z_1, z_2,0)$
$$
+ \,\ \sum_{j=1}^n \, 
e^{\pi i(s-j)(z_1-z_2)} \, 
q^{-\frac{1}{4m}(s-j)^2} \, 
\theta_{s-j, m}^{(\pm)}(\tau, \, z_1+z_2)
$$
\end{enumerate}
\end{lemma}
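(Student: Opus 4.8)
The plan is to reduce both identities to the single-step case $n=1$ and then telescope. First I would form the term-by-term difference of the defining series \eqref{indef:eqn:2023-1120b} at the parameters $s+1$ and $s$, with $\tau,z_1,z_2$ held fixed. In the $j$-th summand the two numerators share the factor $(\pm1)^j e^{2\pi imj(z_1+z_2)+2\pi isz_1}q^{mj^2+sj}$ and differ by the remaining factor $e^{2\pi iz_1}q^j-1$, which is exactly $-1$ times the common denominator $1-e^{2\pi iz_1}q^j$. Hence every denominator cancels and
\begin{equation*}
\Phi^{(\pm)[m,s+1]}_1(\tau,z_1,z_2,0)-\Phi^{(\pm)[m,s]}_1(\tau,z_1,z_2,0)
\,=\, -\sum_{j\in\zzz}(\pm1)^j\,e^{2\pi imj(z_1+z_2)+2\pi isz_1}\,q^{mj^2+sj}.
\end{equation*}

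Second, I would identify this pole-free sum with a theta function. Recalling $\theta_{s,m}^{(\pm)}(\tau,w)=\sum_{j\in\zzz}(\pm1)^j q^{m(j+\frac{s}{2m})^2}e^{2\pi im(j+\frac{s}{2m})w}$, completing the square in the $q$-exponent and using $e^{-\pi is(z_1-z_2)}e^{2\pi isz_1}=e^{\pi is(z_1+z_2)}$ gives
\begin{equation*}
\sum_{j\in\zzz}(\pm1)^j\,e^{2\pi imj(z_1+z_2)+2\pi isz_1}\,q^{mj^2+sj}
\,=\, e^{\pi is(z_1-z_2)}\,q^{-\frac{1}{4m}s^2}\,\theta_{s,m}^{(\pm)}(\tau,z_1+z_2),
\end{equation*}
so that the single-step formula reads $\Phi^{(\pm)[m,s+1]}_1-\Phi^{(\pm)[m,s]}_1=-\,e^{\pi is(z_1-z_2)}q^{-\frac{1}{4m}s^2}\theta_{s,m}^{(\pm)}(\tau,z_1+z_2)$.

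Third, I would telescope. For $1)$ I write $\Phi^{(\pm)[m,s+n]}_1-\Phi^{(\pm)[m,s]}_1=\sum_{j=0}^{n-1}\big(\Phi^{(\pm)[m,s+j+1]}_1-\Phi^{(\pm)[m,s+j]}_1\big)$ and apply the single-step formula with $s$ replaced by $s+j$, which yields exactly $-\sum_{j=0}^{n-1}e^{\pi i(s+j)(z_1-z_2)}q^{-\frac{1}{4m}(s+j)^2}\theta_{s+j,m}^{(\pm)}(\tau,z_1+z_2)$. For $2)$ I sum the single-step identity in the decreasing direction, $\Phi^{(\pm)[m,s-n]}_1-\Phi^{(\pm)[m,s]}_1=\sum_{j=1}^{n}\big(\Phi^{(\pm)[m,s-j]}_1-\Phi^{(\pm)[m,s-j+1]}_1\big)$; each summand equals $+\,e^{\pi i(s-j)(z_1-z_2)}q^{-\frac{1}{4m}(s-j)^2}\theta_{s-j,m}^{(\pm)}(\tau,z_1+z_2)$ by the single-step formula with $s$ replaced by $s-j$, reproducing the stated sign and sum.

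The computation is elementary, and the points requiring care are only bookkeeping: the sign $(\pm1)^j$ is unchanged under $s\mapsto s+1$ and matches the alternation built into $\theta_{s,m}^{(\pm)}$, and the prefactor $e^{\pi is(z_1-z_2)}q^{-s^2/4m}$ must be recovered correctly from completing the square. The only genuine, if mild, technical obstacle is justifying the term-by-term manipulation, which is legitimate by the absolute convergence of \eqref{indef:eqn:2023-1120b} for $|q|<1$ away from the poles $e^{2\pi iz_1}q^j=1$.
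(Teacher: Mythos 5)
Your proof is correct and is exactly the calculation the paper has in mind: it omits the argument, deferring to ``a similar calculation with the proof of Lemma 2.1 in [W2022b]'', which is precisely this termwise cancellation of the denominator in the difference $\Phi^{(\pm)[m,s+1]}_1-\Phi^{(\pm)[m,s]}_1$, identification of the resulting pole-free sum with $e^{\pi is(z_1-z_2)}q^{-s^2/4m}\theta^{(\pm)}_{s,m}(\tau,z_1+z_2)$ by completing the square, and telescoping. Your signs, prefactors, and the convergence remark all check out against the conventions used elsewhere in the paper.
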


\medskip

In this paper, the Kac-Peterson's identity stated in Lemma 2.1 of 
\cite{W2022e} will be used in the following form:

\medskip

\begin{lemma} \quad 
\label{indef3:lemma:2023-1122d}
\label{note:2023-1022a} 
For $s \in \frac12 \zzz_{\rm odd}$ and $k \in \zzz$, the 
following formula holds:
\begin{equation}
\Phi^{(-)[\frac12, s]}_1(\tau, z, -z+2k\tau,0) 
\, = \, 
- \, i \, e^{-2\pi ikz} \, 
\dfrac{\eta(\tau)^3}{\vartheta_{11}(\tau,z)}
\label{indef3:eqn:2023-1122a}
\end{equation}
\end{lemma}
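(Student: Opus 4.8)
The plan is to substitute the specific arguments into the definition \eqref{indef:eqn:2023-1120b} and reduce everything to the $k=0$ case, where the Kac--Peterson identity of \cite{W2022e} applies directly. First I would set $z_1=z$ and $z_2=-z+2k\tau$ in $\Phi^{(-)[\frac12,s]}_1$. The crucial simplification is that $z_1+z_2=2k\tau$, so the factor $e^{2\pi i\cdot\frac12 j(z_1+z_2)}=e^{2\pi ijk\tau}=q^{jk}$ combines with $q^{\frac12 j^2+sj}$ to produce $q^{\frac12 j^2+(s+k)j}$. This yields
$$
\Phi^{(-)[\frac12,s]}_1(\tau, z, -z+2k\tau, 0) \, = \, e^{2\pi isz}\sum_{j \in \zzz}(-1)^j \frac{q^{\frac12 j^2 + (s+k)j}}{1 - e^{2\pi iz}q^j}.
$$

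Next I would observe that $s$ and $k$ enter the right-hand side only through the combination $t:=s+k$, which again lies in $\frac12\zzz_{\rm odd}$. Writing $e^{2\pi isz}=e^{-2\pi ikz}e^{2\pi itz}$ and comparing with the definition evaluated at $z_2=-z$, one recognizes the sum as $e^{-2\pi ikz}\,\Phi^{(-)[\frac12,t]}_1(\tau,z,-z,0)$, since $s-t=-k$. Thus the claim becomes equivalent to the single identity $\Phi^{(-)[\frac12,t]}_1(\tau,z,-z,0)=-i\,\eta(\tau)^3/\vartheta_{11}(\tau,z)$ for every $t\in\frac12\zzz_{\rm odd}$, with the extracted factor $e^{-2\pi ikz}$ accounting precisely for the exponential prefactor on the right-hand side of \eqref{indef3:eqn:2023-1122a}.

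It then remains to establish this $k=0$ identity, and here I would use Lemma \ref{indef3:lemma:2023-1122c} to show that the left-hand side does not depend on the representative $t$. Indeed, part 1) of that lemma with $m=\frac12$ and $n=1$ expresses the difference $\Phi^{(-)[\frac12,t+1]}_1-\Phi^{(-)[\frac12,t]}_1$ as a single correction term proportional to $\theta^{(-)}_{t,1/2}(\tau,z_1+z_2)$; since the $(-)$ theta function is an odd function of its elliptic variable, it vanishes at $z_1+z_2=0$, which is exactly the specialization $z_2=-z$. Hence $\Phi^{(-)[\frac12,t]}_1(\tau,z,-z,0)$ takes the same value for all $t\in\frac12\zzz_{\rm odd}$, so it suffices to verify the formula for one representative --- which is precisely the content of the Kac--Peterson identity recorded in Lemma 2.1 of \cite{W2022e}.

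The computation is short, so the main obstacle is bookkeeping rather than conceptual: I must match the normalization conventions relating the $\Phi_1$ notation, the classical $\vartheta_{11}$ and $\eta(\tau)^3$, and the form in which Kac--Peterson is stated in \cite{W2022e}, confirming that both the constant $-i$ and the prefactor $e^{-2\pi ikz}$ are reproduced exactly. The only genuinely structural input is the vanishing $\theta^{(-)}_{t,1/2}(\tau,0)=0$, which is what secures the $t$-independence; everything else is the direct substitution described above.
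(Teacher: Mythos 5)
Your argument is correct and rests on the same two pillars as the paper's proof: the $s$-shift formula of Lemma \ref{indef3:lemma:2023-1122c} together with the vanishing of the odd theta function kills the correction terms, and the Kac--Peterson identity from Lemma 2.1 of \cite{W2022e} supplies the base case. The one genuine difference is organizational: you first absorb the translate $2k\tau$ into the index via the elementary substitution $q^{jk}\cdot q^{\frac12 j^2+sj}=q^{\frac12 j^2+(s+k)j}$, reducing everything to $k=0$, and then apply Lemma \ref{indef3:lemma:2023-1122c} at $z_1+z_2=0$, where you only need $\theta^{(-)}_{t,\frac12}(\tau,0)=0$ and only the $k=0$ form of the Kac--Peterson identity. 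The paper instead applies Lemma \ref{indef3:lemma:2023-1122c} directly at $z_2=-z+2k\tau$, so its correction terms are evaluated at $2k\tau$ and vanish because $\theta^{(-)}_{\frac12,\frac12}(\tau,2k\tau)=-i\,\vartheta_{11}(\tau,k\tau)=0$ for all $k\in\zzz$, and it then cites the $k$-dependent version of Lemma 2.1 of \cite{W2022e} wholesale. Your route buys a weaker external input (the specialization $\Phi^{(-)[\frac12,\frac12]}_1(\tau,z,-z,0)=-i\,\eta(\tau)^3/\vartheta_{11}(\tau,z)$ alone) at the cost of one extra bookkeeping step; both are equally rigorous, and your factorization $e^{2\pi isz}=e^{-2\pi ikz}e^{2\pi itz}$ with $t=s+k\in\frac12\zzz_{\rm odd}$ does reproduce the prefactor $e^{-2\pi ikz}$ exactly as claimed.
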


\begin{proof} Letting $m=s=\frac12$ and $(z_1, z_2)=(z, -z+2k\tau)$
in Lemma \ref{indef3:lemma:2023-1122c}, we have 
{\allowdisplaybreaks
\begin{eqnarray*}
& & \hspace{-20mm}
\Phi^{(-)[\frac12,\frac12+n]}_1(\tau, z, -z+2k\tau,0) 
\,\ = \,\ 
\Phi^{(-)[\frac12,\frac12]}_1(\tau, z, -z+2k\tau,0)
\\[2mm]
& & 
- \,\ \underbrace{\sum_{j=0}^{n-1}
e^{\pi i(\frac12+j)(2z-2k\tau)} \, 
q^{-\frac{1}{2}(\frac12+j)^2} \, 
\theta_{\frac12+j, \frac12}^{(-)}(\tau, \, 2k\tau)}_{A}
\\[3mm]
& & \hspace{-20mm}
\Phi^{(-)[\frac12,\frac12-n]}_1(\tau, z, -z+2k\tau,0) 
\,\ = \,\ 
\Phi^{(-)[\frac12,\frac12]}_1(\tau, z, -z+2k\tau,0)
\\[2mm]
& & 
+ \,\ \underbrace{\sum_{j=1}^n
e^{\pi i(\frac12-j)(2z-2k\tau)} \, 
q^{-\frac{1}{2}(\frac12-j)^2} \, 
\theta_{\frac12-j, \frac12}^{(-)}(\tau, \, 2k\tau)}_{B}
\end{eqnarray*}}
for $n \in \nnn$. \, Since 
$$
\theta_{\frac12 \pm j, \, \frac12}^{(-)}(\tau, 2k\tau)
\,\ = \,\ 
(-1)^j \, \theta_{\frac12, \frac12}^{(-)}(\tau, 2k\tau)
\,\ = \,\ 
- \, i \, (-1)^j \, \vartheta_{11}(\tau, k \tau) 
\,\ = \,\ 0, 
$$
we have \,\ $A=B=0$, so the above equations become as follows:
$$
\Phi^{(-)[\frac12, \frac12 \pm n]}_1(\tau, z, -z+2k\tau,0) 
\, = \, 
\Phi^{(-)[\frac12,\frac12]}_1(\tau, z, -z+2k\tau,0)
\,\ = \,\ 
- \, i \, e^{-2\pi ikz} \, 
\frac{\eta(\tau)^3}{\vartheta_{11}(\tau,z)}
$$
by Lemma 2.1 in \cite{W2022e}, proving \eqref{indef3:eqn:2023-1122a}.
\end{proof}

\section{Functions $g^{[m]}_{n, \nu}(\tau)$, $h^{[m]}_{n, \nu}(\tau,z)$
and $G^{\, [m]}_{n, \nu}(\tau,z)$}




\begin{lemma} 
\label{indef3:lemma:2023-1023a}
For $m \in \frac12 \nnn$ and $s \in \frac12 \zzz_{\rm odd}$, 
the following formulas hold:
\begin{subequations}
\begin{enumerate}
\item[{\rm 1)}] \quad $\theta^{(-)}_{s, m+\frac12}
\Big(\tau, \, - \, \dfrac{2m(z_1-z_2)}{2m+1}\Big) \, 
\Phi^{[m,0]}_1(\tau, \, z_1, \, z_2, \, 0)$
{\allowdisplaybreaks
\begin{eqnarray}
& & 
- \,\ e^{-\frac{\pi ims}{m+\frac12}(z_1-z_2)} \, 
\Big[\sum_{\substack{j, \, k \, \in \, \zzz \\[1mm]
0 \, \leq \, k \, < \, 2mj}}
- 
\sum_{\substack{j, \, k \, \in \, \zzz \\[1mm]
2mj \, \leq \, k \, < \, 0}}\Big] \, f_{j,k} \, 
\theta_{k,m}(\tau, z_1+z_2)
\nonumber
\\[2mm]
&= &
- \,\ i \,\ 
\theta_{s, \, m+\frac12}^{(-)}
\Big(\tau, \, z_1+z_2+\frac{z_1-z_2}{2m+1}\Big) \cdot 
\frac{\eta(\tau)^3}{\vartheta_{11}(\tau,z_1)}
\label{indef3:eqn:2023-1023a1}
\end{eqnarray}}
\item[{\rm 2)}] \quad $\theta^{(-)}_{s, m+\frac12}
\Big(\tau, \, - \, \dfrac{2m(z_1-z_2)}{2m+1}\Big) \, 
\Phi^{[m,0]}_1(\tau, \, z_1, \, z_2, \, 0)$
{\allowdisplaybreaks
\begin{eqnarray}
& & 
+ \,\ 
e^{-\frac{\pi ims}{m+\frac12}(z_1-z_2)} \hspace{-3mm}
\sum_{\substack{r \, \in \, \zzz \\[1mm] 0 \,\leq \, r \, < \, 2m}} 
\Big[\sum_{\substack{j, \, p \, \in \, \zzz \\[1mm]
0 \, < \, p \, \leq \, j}} 
-
\sum_{\substack{j, \, p \, \in \, \zzz \\[1mm]
j \, < \, p \, \leq \, 0}} \Big] \, 
f_{-j, \, r-2pm} \, \theta_{r,m}(\tau,z_1+z_2)
\nonumber
\\[2mm]
&= &
- \,\ i \,\ 
\theta_{s, \, m+\frac12}^{(-)}
\Big(\tau, \, z_1+z_2+\frac{z_1-z_2}{2m+1}\Big) \cdot 
\frac{\eta(\tau)^3}{\vartheta_{11}(\tau,z_1)}
\label{indef3:eqn:2023-1023a2}
\end{eqnarray}}
\end{enumerate}
\end{subequations}
{\rm where}
\begin{equation}
f_{j,k} \,\ := \,\ (-1)^j \, 
q^{(m+\frac12)(j+\frac{s}{2m+1})^2 \, - \, \frac{1}{4m}k^2} \, 
e^{-2\pi ijm(z_1-z_2)\, + \, \pi ik(z_1-z_2)}
\label{indef3:eqn:2023-1023a3}
\end{equation}
\end{lemma}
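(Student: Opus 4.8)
The plan is to reduce the whole left-hand side to a single theta-weighted sum of the mock theta functions $\Phi^{[m,2mj]}_1$, and then to collapse that sum by a change of the summation variable combined with the Kac--Peterson identity of Lemma~\ref{indef3:lemma:2023-1122d}. I will establish 1) in full and deduce 2) from it. First I would expand the outer theta factor: writing out its defining series and using $2(m+\frac12)=2m+1$ together with $z=-\frac{2m(z_1-z_2)}{2m+1}$, one finds
$$\theta^{(-)}_{s,m+\frac12}\Big(\tau,-\frac{2m(z_1-z_2)}{2m+1}\Big)=e^{-\frac{\pi ims}{m+\frac12}(z_1-z_2)}\sum_{j\in\zzz}(-1)^jq^{(m+\frac12)(j+\frac{s}{2m+1})^2}e^{-2\pi imj(z_1-z_2)},$$
which isolates exactly the prefactor $e^{-\frac{\pi ims}{m+\frac12}(z_1-z_2)}$ and the $j$-weight appearing in the statement. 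Comparing with the definition \eqref{indef3:eqn:2023-1023a3} of $f_{j,k}$, the correction summand factors as $f_{j,k}\,\theta_{k,m}(\tau,z_1+z_2)=(-1)^jq^{(m+\frac12)(j+\frac{s}{2m+1})^2}e^{-2\pi imj(z_1-z_2)}\,T_k$, where $T_k:=q^{-k^2/4m}e^{\pi ik(z_1-z_2)}\theta_{k,m}(\tau,z_1+z_2)$. Thus both the product $\theta^{(-)}_{s,m+\frac12}\,\Phi^{[m,0]}_1$ and the correction carry the same $j$-weight, and the whole computation is governed by the scalars $T_k$.

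The key step is to recognize $T_k=\Phi^{[m,k]}_1(\tau,z_1,z_2,0)-\Phi^{[m,k+1]}_1(\tau,z_1,z_2,0)$, the one-step instance of Lemma~\ref{indef3:lemma:2023-1122c}. Hence for each fixed $j$ the bracketed $k$-sum telescopes: using part 1) of that lemma for $j>0$ and part 2) for $j<0$ (and the empty sum for $j=0$), one obtains $\big[\sum_{0\le k<2mj}-\sum_{2mj\le k<0}\big]T_k=\Phi^{[m,0]}_1-\Phi^{[m,2mj]}_1$ uniformly in $j$. Subtracting the correction from $\theta^{(-)}_{s,m+\frac12}\,\Phi^{[m,0]}_1$ therefore cancels the $\Phi^{[m,0]}_1$ contributions and leaves
$$\mathrm{LHS}=e^{-\frac{\pi ims}{m+\frac12}(z_1-z_2)}\sum_{j\in\zzz}(-1)^jq^{(m+\frac12)(j+\frac{s}{2m+1})^2}e^{-2\pi imj(z_1-z_2)}\,\Phi^{[m,2mj]}_1(\tau,z_1,z_2,0).$$

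Next I would insert the series \eqref{indef:eqn:2023-1120b} for $\Phi^{[m,2mj]}_1$ and decouple the resulting double sum. Writing the inner summation index as $p-j$, where $p$ is the degree carried by $e^{2\pi i\cdot(z_1+z_2)}$, the $q$- and $z$-exponents separate and one gets
$$\mathrm{LHS}=e^{-\frac{\pi ims}{m+\frac12}(z_1-z_2)}q^{\frac{s^2}{2(2m+1)}}\sum_{p\in\zzz}q^{mp^2}e^{2\pi imp(z_1+z_2)}\sum_{j\in\zzz}(-1)^j\frac{q^{\frac12 j^2+sj}}{1-e^{2\pi iz_1}q^{p-j}}.$$
Substituting $j\mapsto p-j$ in the inner sum turns its denominator into $1-e^{2\pi iz_1}q^{j}$ and identifies it, up to an explicit exponential, with $\Phi^{(-)[\frac12,s']}_1(\tau,z_1,-z_1+2k\tau,0)$ for any split $s'+k=-(p+s)$ with $s'\in\frac12\zzz_{\rm odd}$, $k\in\zzz$ (possible since $-(p+s)\in\frac12\zzz_{\rm odd}$). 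By the Kac--Peterson identity \eqref{indef3:eqn:2023-1122a} this inner sum equals $-i\,e^{2\pi i(p+s)z_1}\eta(\tau)^3/\vartheta_{11}(\tau,z_1)$, independently of the chosen split. Pulling this factor out, the remaining $p$-sum reassembles into $\theta^{(-)}_{s,m+\frac12}\big(\tau,z_1+z_2+\frac{z_1-z_2}{2m+1}\big)$, and the accumulated exponentials (including the $q$-power $q^{\frac{s^2}{2(2m+1)}}$) combine to cancel the prefactor $e^{-\frac{\pi ims}{m+\frac12}(z_1-z_2)}$ exactly, giving the right-hand side $-i\,\theta^{(-)}_{s,m+\frac12}\big(\tau,z_1+z_2+\frac{z_1-z_2}{2m+1}\big)\,\eta(\tau)^3/\vartheta_{11}(\tau,z_1)$. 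This proves 1).

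Since 2) has the same first term $\theta^{(-)}_{s,m+\frac12}\,\Phi^{[m,0]}_1$ and the same right-hand side as 1), it is equivalent to the purely finite identity that the two correction expressions sum to zero, namely $\big[\sum_{0\le k<2mj}-\sum_{2mj\le k<0}\big]f_{j,k}\theta_{k,m}=-\sum_{0\le r<2m}\big[\sum_{0<p\le j}-\sum_{j<p\le 0}\big]f_{-j,r-2pm}\theta_{r,m}$. I would obtain this by regrouping the $k$-summation of 1) according to the residue $r$ of $k$ modulo $2m$, using the periodicity $\theta_{k+2m,m}=\theta_{k,m}$ and the quasi-periodicity of $T_k$ under $k\mapsto k+2m$ to rewrite the weights $f_{j,\cdot}$ as the weights $f_{-j,r-2pm}$. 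I expect the main obstacle to be precisely this bookkeeping: keeping the telescoping ranges and signs uniform across $j>0$ and $j<0$ in the second step, matching the shifted denominator $1-e^{2\pi iz_1}q^{p-j}$ to the Kac--Peterson specialization $z_2=-z_1+2k\tau$ while verifying the parity condition $s'\in\frac12\zzz_{\rm odd}$, and executing the residue regrouping for 2) without sign errors.
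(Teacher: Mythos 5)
Your argument is correct, and it arrives at the result by a route that is recognizably a reorganization of the paper's proof rather than a copy of it. The paper introduces the auxiliary double series $A^{[m,s]}(\tau,z_1,z_2)=\sum_{j,k}(-1)^j e^{2\pi ikm(z_1+z_2)}q^{\frac12 j^2+mk^2-sj}/(1-e^{2\pi iz_1}q^{j+k})$ and evaluates it twice: the substitution $k=r-j$ produces $\sum_j(-1)^jq^{(m+\frac12)j^2+sj}e^{2\pi ijm(z_1+z_2)}\Phi^{[m,0]}_1(\tau,z_1,z_2+2j\tau,0)$, which is converted into $\theta^{(-)}_{s,m+\frac12}\cdot\Phi^{[m,0]}_1$ minus the correction sum by citing Lemma 2.5 of \cite{W2022b} as a black box, while the substitution $j=r-k$ produces the $\Phi^{(-)[\frac12,-s]}_1(\tau,z_1,-z_1-2k\tau,0)$ series handled by Lemma \ref{indef3:lemma:2023-1122d}. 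Your forward computation passes through exactly the same intermediate object: your $e^{-\frac{\pi ims}{m+\frac12}(z_1-z_2)}\sum_j w_j\Phi^{[m,2mj]}_1$ equals $q^{\frac{s^2}{2(2m+1)}}e^{-\frac{\pi ims}{m+\frac12}(z_1-z_2)}A^{[m,s]}$ (since $\Phi^{[m,2mj]}_1(\tau,z_1,z_2,0)=e^{4\pi imjz_1}\Phi^{[m,0]}_1(\tau,z_1,z_2+2j\tau,0)$), and your reindexing $j\mapsto p-j$ is the paper's second substitution read backwards. What your version genuinely buys is self-containment: the step the paper outsources to Lemma 2.5 of \cite{W2022b} you rederive from the one-step telescoping $\Phi^{[m,k]}_1-\Phi^{[m,k+1]}_1=e^{\pi ik(z_1-z_2)}q^{-\frac{k^2}{4m}}\theta_{k,m}(\tau,z_1+z_2)$ of Lemma \ref{indef3:lemma:2023-1122c}, and the uniform identity $\big[\sum_{0\le k<2mj}-\sum_{2mj\le k<0}\big]T_k=\Phi^{[m,0]}_1-\Phi^{[m,2mj]}_1$ is verified correctly for all signs of $j$. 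Two small points of care: in the reindexed inner sum the factor $(-1)^pq^{\frac12 p^2+sp}$ extracted by $j\mapsto p-j$ must be retained (your final reassembly into $\theta^{(-)}_{s,m+\frac12}(\tau,z_1+z_2+\frac{z_1-z_2}{2m+1})$ does account for it, and the accumulated exponentials indeed cancel the prefactor because $e^{2\pi isz_1}e^{-\pi is(z_1+z_2+\frac{z_1-z_2}{2m+1})}=e^{\frac{\pi ims}{m+\frac12}(z_1-z_2)}$); and your derivation of 2) from 1) by sorting $k=r-2pm$ with $0\le r<2m$ is precisely the paper's formula \eqref{indef3:eqn:2023-1023d}, which the paper likewise states without proof, so no ground is lost there.
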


\medskip

\begin{proof} 1) \,\ We put 
\begin{equation}
A^{[m,s]}(\tau, z_1,z_2) \,\ := \,\ 
\sum_{j, \, k \, \in \, \zzz} (-1)^j \, 
\frac{e^{2\pi ikm(z_1+z_2)} \, q^{\frac12 j^2+mk^2-sj}
}{1-e^{2\pi iz_1} \, q^{j+k}}
\label{indef3:eqn:2023-1023b}
\end{equation}
and compute the RHS of this function as follows.

\medskip

Firstly, by putting $k=r-j$, the RHS of 
\eqref{indef3:eqn:2023-1023b} becomes as follows:
\begin{subequations}
{\allowdisplaybreaks
\begin{eqnarray}
& & \hspace{-13mm}
A^{[m,s]}(\tau, z_1,z_2)
\,\ = \,\ 
\sum_{j, \, r \, \in \, \zzz} (-1)^j
\frac{e^{2\pi im(r-j)(z_1+z_2)} \, 
q^{\frac12 j^2+ m(r-j)^2 - sj}
}{1-e^{2\pi iz_1} \, q^r}
\nonumber
\\[-1mm]
&=&
\sum_{j \in \zzz} (-1)^j \, 
q^{(m+\frac12)j^2-sj} \, 
e^{-2\pi ijm(z_1+z_2)} 
\underbrace{\sum_{r \in \zzz} \frac{
e^{2\pi imr(z_1+z_2-2j\tau)}
q^{mr^2}}{1-e^{2\pi iz_1} \, q^r}}_{\substack{|| \\[-1.5mm] 
{\displaystyle \Phi^{[m,0]}_1(\tau, \, z_1, \, z_2-2j\tau, \, 0)
}}}
\nonumber
\\[2mm]
& \hspace{-3mm}
\underset{\substack{\\[0.5mm] \uparrow \\[1mm] j \, \rightarrow \, -j
}}{=} \hspace{-3mm} &
\sum_{j \in \zzz} (-1)^j \, 
q^{(m+\frac12)j^2+sj} \, 
e^{2\pi ijm(z_1+z_2)} \, 
\Phi^{[m,0]}_1(\tau, \, z_1, \, z_2+2j\tau, \, 0)
\label{indef3:eqn:2023-1023b1}
\end{eqnarray}}

Secondly, by putting $j=r-k$, the RHS of 
\eqref{indef3:eqn:2023-1023b} becomes as follows:
{\allowdisplaybreaks
\begin{eqnarray*}
& & \hspace{-13mm}
A^{[m,s]}(\tau, z_1,z_2)
\,\ = \,\ 
\sum_{k, \, r \, \in \, \zzz} (-1)^{k+r} \, 
\frac{e^{2\pi ikm(z_1+z_2)} \, 
q^{\frac12 (r-k)^2+mk^2-s(r-k)}
}{1-e^{2\pi iz_1} \, q^r}
\\[2mm]
&=&
e^{2\pi isz_1}
\sum_{k \in \zzz} (-1)^k
q^{(m+\frac12)k^2+sk} \, 
e^{2\pi ikm(z_1+z_2)} \, 
\underbrace{\sum_{r \in \zzz} (-1)^r \, 
\frac{q^{-kr}e^{-2\pi isz_1} \, q^{\frac12 r^2-sr}
}{1-e^{2\pi iz_1} \, q^r} }_{\substack{|| \\[-1.5mm] {\displaystyle 
\Phi^{(-)[\frac12,-s]}_1(\tau, \, z_1, \, -z_1-2k\tau, \, 0)
}}}
\end{eqnarray*}}
which, by Lemma \ref{indef3:lemma:2023-1122d}, is equal to
\begin{equation}
= \,\ - \,\ i \, 
\underbrace{e^{2\pi isz_1}\sum_{k \in \zzz} (-1)^k
q^{(m+\frac12)k^2+sk} \, 
e^{2\pi ikm(z_1+z_2)} \, e^{2\pi ikz_1}}_{(A)} \, 
\frac{\eta(\tau)^3}{\vartheta_{11}(\tau,z_1)}
\label{indef3:eqn:2023-1023b2}
\end{equation}
Here, by easy calculation, $(A)$ is rewritten as follows:
{\allowdisplaybreaks
\begin{eqnarray}
& & \hspace{-8mm}
(A) \, = \, 
e^{\frac{\pi ims}{m+\frac12}(z_1-z_2)} \, 
q^{- \frac{s^2}{4(m+\frac12)}}
\sum_{k \in \zzz}(-1)^k \, 
q^{(m+\frac12) (k+\frac{s}{2(m+\frac12)})^2} \, 
e^{2\pi i(m+\frac12)(k+\frac{s}{2(m+\frac12)})
(z_1+z_2+\frac{z_1-z_2}{2m+1})}
\nonumber
\\[1mm]
&=&
e^{\frac{\pi ims}{m+\frac12}(z_1-z_2)} \, 
q^{- \frac{s^2}{4(m+\frac12)}} \, 
\theta_{s, \, m+\frac12}^{(-)}
\Big(\tau, \, z_1+z_2+\frac{z_1-z_2}{2m+1}\Big)
\label{indef3:eqn:2023-1023b3}
\end{eqnarray}}
\end{subequations}
Then, by \eqref{indef3:eqn:2023-1023b1},
\eqref{indef3:eqn:2023-1023b2} and \eqref{indef3:eqn:2023-1023b3},
we obtain the following:
\begin{subequations}
{\allowdisplaybreaks
\begin{eqnarray}
& & 
\sum_{j \in \zzz} (-1)^j \, 
q^{(m+\frac12)j^2+sj} \, 
e^{2\pi ijm(z_1+z_2)} \, 
\Phi^{[m,0]}_1(\tau, \, z_1, \, z_2+2j\tau, \, 0)
\nonumber
\\[1mm]
& & \hspace{-12mm}
= \,\ - \,\ i \,\ 
e^{\frac{\pi ims}{m+\frac12}(z_1-z_2)} \, 
q^{- \frac{s^2}{4(m+\frac12)}} \, 
\theta_{s, \, m+\frac12}^{(-)}
\Big(\tau, \, z_1+z_2+\frac{z_1-z_2}{2m+1}\Big) \cdot 
\frac{\eta(\tau)^3}{\vartheta_{11}(\tau,z_1)}
\label{indef3:eqn:2023-1023c1}
\end{eqnarray}}
By Lemma 2.5 in \cite{W2022b}, the LHS of this equation is 
rewitten as follows: 
{\allowdisplaybreaks
\begin{eqnarray}
& & \hspace{-10mm}
\text{LHS of \eqref{indef3:eqn:2023-1023c1}} 
\, = \,\ 
q^{-\frac{s^2}{4(m+\frac12)}} \, 
e^{\frac{\pi ims}{m+\frac12}(z_1-z_2)} \, 
\theta^{(-)}_{s, m+\frac12}
\Big(\tau, \, - \frac{2m(z_1-z_2)}{2m+1}\Big) \, 
\Phi^{[m,0]}_1(\tau, \, z_1, \, z_2, \, 0)
\nonumber
\\[3mm]
& & 
- \,\ q^{-\frac{s^2}{4(m+\frac12)}}
\Big[\sum_{\substack{j, \, k \, \in \, \zzz \\[1mm]
0 \, \leq \, k \, < \, 2mj}}
- 
\sum_{\substack{j, \, k \, \in \, \zzz \\[1mm]
2mj \, \leq \, k \, < \, 0}}\Big]
(-1)^j \, q^{(m+\frac12)(j+\frac{s}{2(m+\frac12)})^2} \, 
q^{-\frac{1}{4m}k^2} 
\nonumber
\\[2mm]
& & \hspace{15mm}
\times \,\ 
e^{-2\pi ijm(z_1-z_2)} \, e^{\pi ik(z_1-z_2)} \, 
\theta_{k,m}(\tau, z_1+z_2)
\label{indef3:eqn:2023-1023c2}
\end{eqnarray}}
\end{subequations}
Then, by \eqref{indef3:eqn:2023-1023c1} and 
\eqref{indef3:eqn:2023-1023c2}, we obtain 
{\allowdisplaybreaks
\begin{eqnarray*}
& & \hspace{4mm}
q^{-\frac{s^2}{4(m+\frac12)}} \, 
e^{\frac{\pi ims}{m+\frac12}(z_1-z_2)} \, 
\theta^{(-)}_{s, m+\frac12}
\Big(\tau, \, - \frac{2m(z_1-z_2)}{2m+1}\Big) \, 
\Phi^{[m,0]}_1(\tau, \, z_1, \, z_2, \, 0)
\\[3mm]
& & 
- \,\ q^{-\frac{s^2}{4(m+\frac12)}}
\Big[\sum_{\substack{j, \, k \, \in \, \zzz \\[1mm]
0 \, \leq \, k \, < \, 2mj}}
- 
\sum_{\substack{j, \, k \, \in \, \zzz \\[1mm]
2mj \, \leq \, k \, < \, 0}}\Big]
(-1)^j \, q^{(m+\frac12)(j+\frac{s}{2(m+\frac12)})^2} \, 
q^{-\frac{1}{4m}k^2} 
\\[2mm]
& & \hspace{15mm}
\times \,\ 
e^{-2\pi ijm(z_1-z_2)} \, e^{\pi ik(z_1-z_2)} \, 
\theta_{k,m}(\tau, z_1+z_2)
\\[1mm]
&= &
- \, i \, 
e^{\frac{\pi ims}{m+\frac12}(z_1-z_2)} \, 
q^{- \frac{s^2}{4(m+\frac12)}} \, 
\theta_{s, \, m+\frac12}^{(-)}
\Big(\tau, \, z_1+z_2+\frac{z_1-z_2}{2m+1}\Big) \cdot 
\frac{\eta(\tau)^3}{\vartheta_{11}(\tau,z_1)}
\end{eqnarray*}}
namely
{\allowdisplaybreaks
\begin{eqnarray*}
& & \hspace{4mm}
\theta^{(-)}_{s, m+\frac12}
\Big(\tau, \, - \frac{2m(z_1-z_2)}{2m+1}\Big) \, 
\Phi^{[m,0]}_1(\tau, \, z_1, \, z_2, \, 0)
\\[3mm]
& & 
- \,\ e^{-\frac{\pi ims}{m+\frac12}(z_1-z_2)} \, 
\Big[\sum_{\substack{j, \, k \, \in \, \zzz \\[1mm]
0 \, \leq \, k \, < \, 2mj}}
- 
\sum_{\substack{j, \, k \, \in \, \zzz \\[1mm]
2mj \, \leq \, k \, < \, 0}}\Big]
(-1)^j \, q^{(m+\frac12)(j+\frac{s}{2(m+\frac12)})^2} \, 
q^{-\frac{1}{4m}k^2} 
\\[2mm]
& & \hspace{15mm}
\times \,\ 
e^{-2\pi ijm(z_1-z_2)} \, e^{\pi ik(z_1-z_2)} \, \theta_{k,m}(\tau, z_1+z_2)
\\[2mm]
&= &
- \,\ i \,\ 
\theta_{s, \, m+\frac12}^{(-)}
\Big(\tau, \, z_1+z_2+\frac{z_1-z_2}{2m+1}\Big) \cdot 
\frac{\eta(\tau)^3}{\vartheta_{11}(\tau,z_1)}
\end{eqnarray*}}

\noindent
proving 1). The claim 2) is obtained from 1) by rewriting the 2nd term 
in the LHS of \eqref{indef3:eqn:2023-1023a1} by using an easy formula 
{\allowdisplaybreaks
\begin{eqnarray}
& &
\Big[ 
\sum\limits_{\substack{j, \, k \, \in \, \zzz \\[1mm]
0 \, \leq \, k \, < \, 2mj}}
-
\sum\limits_{\substack{j, \, k \, \in \, \zzz \\[1mm]
2mj \, \leq \, k \, < \, 0}} \Big] \, 
f_{j,k} \cdot \theta_{k,m}^{(\pm)}(\tau,z)
\nonumber
\\[2mm]
&=&
- \sum_{\substack{r \, \in \, \zzz \\[1mm] 0 \,\leq \, r \, < \, 2m}} 
\Big[\sum_{\substack{j, \, p \, \in \, \zzz \\[1mm]
0 \, < \, p \, \leq \, j}} 
-
\sum_{\substack{j, \, p \, \in \, \zzz \\[0mm]
j \, < \, p \, \leq \, 0}} \Big] \, 
(\pm 1)^p \, f_{-j, \, r-2pm} \, \theta_{r,m}^{(\pm)}(\tau,z)
\label{indef3:eqn:2023-1023d}
\end{eqnarray}}

\vspace{-10mm}
\end{proof}

\medskip

Letting 
\begin{equation}
(z_1,z_2) \,\ = \,\ (z+a\tau+c, \, z+b\tau+d)
\label{indef3:eqn:2023-1203c1}
\end{equation}
in \eqref{indef3:eqn:2023-1023a2}, we obtain the following :

\medskip

\begin{lemma} 
\label{indef3:lemma:2023-1123b}
For $m \in \frac12 \nnn$ and $s \in \frac12 \zzz_{\rm odd}$ and 
$a,b,c,d \in \ccc$, the following formula holds:
{\allowdisplaybreaks
\begin{eqnarray}
& &
\theta^{(-)}_{s, m+\frac12}\Big(\tau, \,\ 
-\frac{m(a-b)}{m+\frac12} \, \tau
\, - \, 
\frac{\frac12(c-d)}{m+\frac12}\Big) \, 
\Phi^{[m,0]}_1(\tau, \, z+a\tau+c, \, z+b\tau+d, \, 0)
\nonumber
\\[3mm]
& & 
+ \,\ 
e^{-\frac{\pi ims}{m+\frac12}[(a-b)\tau+c-d]} \hspace{-3mm}
\sum_{\substack{r \, \in \, \zzz \\[1mm] 0 \,\leq \, r \, < \, 2m}} 
\Big[\sum_{\substack{j, \, p \, \in \, \zzz \\[1mm]
0 \, < \, p \, \leq \, j}} 
-
\sum_{\substack{j, \, p \, \in \, \zzz \\[1mm]
j \, < \, p \, \leq \, 0}} \Big] 
f_{-j, \, r-2pm} \, \theta_{r,m}(\tau, \, 2z+(a+b)\tau+c+d)
\nonumber
\\[2mm]
&= &
- \,\ i \,\ 
\theta_{s, \, m+\frac12}^{(-)}
\Big(\tau, \, 2z+(a+b)\tau+c+d+\frac{(a-b)\tau+c-d}{2m+1}\Big) 
\nonumber
\\[2mm]
& & \hspace{15mm}
\times \,\ 
\frac{\eta(\tau)^3}{\vartheta_{11}\Big(\tau, \, 
\dfrac{2z+(a+b)\tau+c+d}{2} + \dfrac{(a-b)\tau+c-d}{2}\Big)}
\label{indef3:eqn:2023-1023e}
\end{eqnarray}}
\end{lemma}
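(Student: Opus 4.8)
The plan is to treat the statement as a direct specialization of part 2) of Lemma \ref{indef3:lemma:2023-1023a}. Equation \eqref{indef3:eqn:2023-1023a2} is an identity of meromorphic functions in the pair $(z_1,z_2)$: the series \eqref{indef:eqn:2023-1120b} defining $\Phi^{[m,0]}_1$ converges on a domain and extends meromorphically, with poles exactly at the zeros of $\vartheta_{11}(\tau,z_1)$, so the identity persists under substitution of arbitrary complex values. Accordingly I would simply set $(z_1,z_2)=(z+a\tau+c,\,z+b\tau+d)$ as prescribed in \eqref{indef3:eqn:2023-1203c1} and read off what each constituent becomes; the only mild caution is to keep $z$ off the lattice so that $\vartheta_{11}(\tau,z_1)\neq 0$ and no spurious pole is hit.

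The first step is to record the three combinations that occur: $z_1-z_2=(a-b)\tau+(c-d)$, $z_1+z_2=2z+(a+b)\tau+(c+d)$, and $z_1=z+a\tau+c$. Substituting $z_1+z_2$ into $\theta_{r,m}(\tau,z_1+z_2)$ and into the right–hand theta $\theta^{(-)}_{s,m+\frac12}\!\big(\tau,z_1+z_2+\tfrac{z_1-z_2}{2m+1}\big)$ is immediate and yields the $2z+(a+b)\tau+c+d$ together with the $+\tfrac{(a-b)\tau+c-d}{2m+1}$ appearing in \eqref{indef3:eqn:2023-1023e}. Substituting $z_1-z_2$ into the phase $e^{-\frac{\pi ims}{m+\frac12}(z_1-z_2)}$ gives the stated $e^{-\frac{\pi ims}{m+\frac12}[(a-b)\tau+c-d]}$, while the coefficients $f_{-j,\,r-2pm}$ — which by \eqref{indef3:eqn:2023-1023a3} depend on $(z_1,z_2)$ only through $z_1-z_2$ — carry the substitution implicitly and need no rewriting, exactly as displayed.

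The two remaining, purely presentational, checks are the argument of the leading theta and that of $\vartheta_{11}$. For the leading theta I would use $2m+1=2(m+\frac12)$ to rewrite $\tfrac{2m}{2m+1}=\tfrac{m}{m+\frac12}$, so that $-\tfrac{2m(z_1-z_2)}{2m+1}$ splits into a $\tau$-part $-\tfrac{m(a-b)}{m+\frac12}\tau$ and a constant part $-\tfrac{m(c-d)}{m+\frac12}$, which I would then match against the first argument in \eqref{indef3:eqn:2023-1023e}, paying particular attention to the constant coefficient. For $\vartheta_{11}$ the key observation is the trivial splitting $z_1=\tfrac{z_1+z_2}{2}+\tfrac{z_1-z_2}{2}$, which upon substitution reproduces the grouping $\tfrac{2z+(a+b)\tau+c+d}{2}+\tfrac{(a-b)\tau+c-d}{2}$ inside $\vartheta_{11}$; this arrangement is chosen to display the argument via the same half-sum and half-difference that govern the other factors. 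There is no genuine obstacle: the content is entirely supplied by Lemma \ref{indef3:lemma:2023-1023a}, and the work reduces to sorting each factor according to whether it depends on $z_1-z_2$, on $z_1+z_2$, or on $z_1$ alone, the single delicate point being the consistent application of the rescaling $\tfrac{2m}{2m+1}=\tfrac{m}{m+\frac12}$ to the constant coefficient of the first theta argument.
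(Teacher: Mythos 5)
Your proposal is correct and is exactly the paper's own argument: the paper offers no proof beyond the remark that the lemma follows by letting $(z_1,z_2)=(z+a\tau+c,\,z+b\tau+d)$ in \eqref{indef3:eqn:2023-1023a2}, which is precisely the substitution and bookkeeping you carry out. The one point you rightly single out --- the constant coefficient in the first theta argument --- is in fact where the printed statement deviates from what the substitution yields, namely $-\frac{m(c-d)}{m+\frac12}$ rather than the displayed $-\frac{\frac12(c-d)}{m+\frac12}$; this appears to be a typo in the lemma as stated (harmless in the sequel, where it is applied with $c=d=0$).
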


\medskip

\begin{lemma} 
\label{indef3:lemma:2023-1123c}
Let $m \in \frac12 \nnn$ and $\nu \in \zzz$ such that $0 \leq \nu \leq 2m$. 
Then
{\allowdisplaybreaks
\begin{eqnarray}
& & \hspace{-10mm}
(-1)^{\nu} \, q^{-m\nu^2} \, 
\theta^{(-)}_{\nu+\frac12, m+\frac12}(\tau,0) \, 
\Phi^{[m,0]}_1\Big(\tau, \,\ 
\frac{z}{2}+\nu\tau, \,\ \frac{z}{2}-\nu\tau, \,\ 0\Big)
\nonumber
\\[2mm]
& & \hspace{-8mm}
+ \hspace{-3mm}
\sum_{\substack{r \, \in \, \zzz \\[1mm] 0 \,\leq \, r \, < \, 2m}} 
\Big[\sum_{\substack{j, \, p \, \in \, \zzz \\[1mm]
0 \, < \, p \, \leq \, j}} 
-
\sum_{\substack{j, \, p \, \in \, \zzz \\[1mm]
j \, < \, p \, \leq \, 0}} \Big] 
(-1)^j \,
q^{(m+\frac12)(j+\nu-\frac{\nu+\frac12}{2(m+\frac12)})^2
\, - \, m \, (p+\nu-\frac{r}{2m})^2} 
\theta_{r,m}(\tau, \, z)
\nonumber
\\[2mm]
& & \hspace{-10mm}
= \,\ 
- \,\ (-1)^{\nu} \, \eta(\tau)^3 \, 
\frac{\theta_{\nu+\frac12, m+\frac12}^{(-)}(\tau, \, z)}{
\theta_{\frac12, \frac12}^{(-)}(\tau, z)}
\label{indef3:eqn:2023-1123a}
\end{eqnarray}}
\end{lemma}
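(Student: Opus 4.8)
The plan is to specialize Lemma \ref{indef3:lemma:2023-1123b} (equation \eqref{indef3:eqn:2023-1023e}) to the parameter values $s=\tfrac12$ (note $\tfrac12\in\tfrac12\zzz_{\mathrm{odd}}$, so the lemma applies), $(a,b,c,d)=(\nu,-\nu,0,0)$, with the free variable $z$ appearing there replaced by $\tfrac{z}{2}$. With these choices one has $z_1-z_2=(a-b)\tau+c-d=2\nu\tau$ and $z_1+z_2=2\cdot\tfrac{z}{2}+(a+b)\tau+c+d=z$, so the $\Phi$-argument becomes exactly $(\tfrac{z}{2}+\nu\tau,\,\tfrac{z}{2}-\nu\tau)$ and every $\theta_{r,m}$ acquires the argument $z$, as required. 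I expect the resulting identity to differ from the asserted one only by a single overall power of $q$, which will be divided out at the end.

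The one ingredient beyond \eqref{indef3:eqn:2023-1023e} is the elliptic (quasi-periodicity) transformation of the theta series, which follows from the definition by completing the square in the summation index: for a subscript $\sigma$ and modulus $M$,
\begin{equation*}
\theta^{(-)}_{\sigma,M}(\tau,z+\lambda\tau)=q^{-M\lambda^2/4}\,e^{-\pi iM\lambda z}\,\theta^{(-)}_{\sigma+M\lambda,\,M}(\tau,z),\qquad \theta^{(-)}_{\sigma+2M,\,M}=-\,\theta^{(-)}_{\sigma,M},
\end{equation*}
together with the identity $\theta^{(-)}_{\frac12,\frac12}(\tau,2w)=-i\,\vartheta_{11}(\tau,w)$ (Lemma 2.1 of \cite{W2022e}). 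Applying the first relation to the prefactor of the first term, with $M=m+\tfrac12$, $\lambda=-\tfrac{2m\nu}{m+\frac12}$ and $z=0$, and then reducing the subscript $\tfrac12-2m\nu=(\nu+\tfrac12)-2(m+\tfrac12)\nu$ by the second relation, turns $\theta^{(-)}_{\frac12,m+\frac12}(\tau,-\tfrac{2m\nu}{m+\frac12}\tau)$ into $(-1)^{\nu}q^{-\frac{m^2\nu^2}{m+\frac12}}\,\theta^{(-)}_{\nu+\frac12,m+\frac12}(\tau,0)$.

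For the right-hand side I would first write $\vartheta_{11}(\tau,\tfrac{z}{2}+\nu\tau)=i\,\theta^{(-)}_{\frac12,\frac12}(\tau,z+2\nu\tau)$ and then apply the quasi-periodicity to both the numerator $\theta^{(-)}_{\frac12,m+\frac12}(\tau,z+\tfrac{2\nu\tau}{2m+1})$ (with $M=m+\tfrac12$, $\lambda=\tfrac{2\nu}{2m+1}$, so $M\lambda=\nu$) and the denominator $\theta^{(-)}_{\frac12,\frac12}(\tau,z+2\nu\tau)$ (with $M=\tfrac12$, $\lambda=2\nu$, so $M\lambda=\nu$). The linear factors $e^{-\pi i\nu z}$ cancel between numerator and denominator, both subscripts become $\nu+\tfrac12$ (the denominator producing an extra $(-1)^{\nu}$ from $\theta^{(-)}_{\nu+\frac12,\frac12}=(-1)^{\nu}\theta^{(-)}_{\frac12,\frac12}$), and the right-hand side collapses to $-(-1)^{\nu}q^{\frac{m\nu^2}{2m+1}}\,\eta(\tau)^3\,\theta^{(-)}_{\nu+\frac12,m+\frac12}(\tau,z)\big/\theta^{(-)}_{\frac12,\frac12}(\tau,z)$.

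It then remains to divide the whole identity by $q^{\frac{m\nu^2}{2m+1}}$ and to check the two left-hand terms. For the first term the exponents combine via $\tfrac{m\nu^2}{2m+1}+\tfrac{m^2\nu^2}{m+\frac12}=m\nu^2$, giving precisely $(-1)^{\nu}q^{-m\nu^2}\theta^{(-)}_{\nu+\frac12,m+\frac12}(\tau,0)\,\Phi^{[m,0]}_1(\tau,\tfrac{z}{2}+\nu\tau,\tfrac{z}{2}-\nu\tau,0)$. For the second term I would insert the definition \eqref{indef3:eqn:2023-1023a3} of $f_{-j,\,r-2pm}$ with $s=\tfrac12$ and $z_1-z_2=2\nu\tau$, convert the exponentials $e^{-2\pi i(-j)m(z_1-z_2)}$ and $e^{\pi i(r-2pm)(z_1-z_2)}$ into the $q$-powers $q^{2jm\nu}$ and $q^{(r-2pm)\nu}$, and complete the square twice: in $r-2pm$ the combination $-\tfrac{1}{4m}(r-2pm)^2+(r-2pm)\nu$ becomes $-m(p+\nu-\tfrac{r}{2m})^2+m\nu^2$, while in $j$ the terms $(m+\tfrac12)(j-\tfrac{1}{2(2m+1)})^2+2jm\nu$ together with the surviving constants assemble into $(m+\tfrac12)(j+\nu-\tfrac{\nu+\frac12}{2(m+\frac12)})^2$; after the division by $q^{\frac{m\nu^2}{2m+1}}$ the spurious constants cancel exactly, reproducing the asserted exponent, while the sign $(-1)^j$ and the index ranges $0<p\le j$, $j<p\le0$, $0\le r<2m$ are untouched. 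The delicate part — and the place where sign or exponent errors are easiest — is precisely this final bookkeeping: tracking the several half-integer $q$-exponents through the two quasi-periodicity shifts and verifying that the single common factor $q^{m\nu^2/(2m+1)}$ is the only discrepancy, so that the two completions of squares land exactly on $(m+\tfrac12)(j+\nu-\tfrac{\nu+\frac12}{2(m+\frac12)})^2-m(p+\nu-\tfrac{r}{2m})^2$.
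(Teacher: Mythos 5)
Your proposal is correct and follows essentially the same route as the paper: the paper likewise specializes Lemma \ref{indef3:lemma:2023-1123b} to $s=\tfrac12$, $(a,b,c,d)=(\nu,-\nu,0,0)$, converts $\theta^{(-)}_{\frac12,m+\frac12}(\tau,-\tfrac{2m\nu\tau}{m+\frac12})$ and the right-hand ratio by the same quasi-periodicity and subscript-reduction identities, completes the squares in $f_{-j,\,r-2pm}$ to produce the extra factor $q^{\frac{m\nu(\nu+1)}{2(m+\frac12)}}$ cancelling the prefactors, and finally replaces $z$ by $\tfrac z2$. The bookkeeping you flag as delicate does close exactly as you predict.
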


\begin{proof} Letting $s=\frac12$ and $(a,b,c,d)=(\nu,-\nu,0,0)$ in 
\eqref{indef3:eqn:2023-1203c1}, one has
\begin{subequations}
{\allowdisplaybreaks
\begin{eqnarray}
& & \hspace{5mm}
\underbrace{\theta^{(-)}_{\frac12, m+\frac12}
\Big(\tau, \, - \, \dfrac{2m\nu\tau}{m+\frac12}\Big)}_{(A)} \, 
\Phi^{[m,0]}_1(\tau, \, z+\nu\tau, \, z-\nu\tau, \, 0)
\nonumber
\\[3mm]
& & 
+ \,\ 
q^{-\frac{m \nu}{2(m+\frac12)}} \,\ 
\sum_{\substack{r \, \in \, \zzz \\[1mm] 0 \,\leqq \, r \, < \, 2m}} 
\Big[\sum_{\substack{j, \, p \, \in \, \zzz \\[1mm]
0 \, < \, p \, \leq \, j}} 
-
\sum_{\substack{j, \, p \, \in \, \zzz \\[1mm]
j \, < \, p \, \leq \, 0}} \Big] 
f_{-j, \, r-2pm} \, \theta_{r,m}(\tau, \, 2z)
\nonumber
\\[2mm]
&= &
- \,\ i \,\ \eta(\tau)^3 \,\ 
\underbrace{\frac{\theta_{\frac12, \, m+\frac12}^{(-)}
\Big(\tau, \, 2z+\dfrac{\nu\tau}{m+\frac12}\Big) \, 
}{
\vartheta_{11}(\tau, \, z + \nu\tau)}}_{(B)}
\label{eindef3:qn:2023-1123b1}
\end{eqnarray}}

Noticing that \, $\left\{
\begin{array}{lcl}
\theta^{(-)}_{\frac12-2m\nu, m+\frac12}
&=& 
\theta^{(-)}_{\frac12+\nu-2\nu(m+\frac12), m+\frac12}
\,\ = \,\ 
(-1)^{\nu} \, \theta^{(-)}_{\nu+\frac12, m+\frac12}
\\[4mm]
\vartheta_{11}(\tau, \, z+\nu\tau)
&=&
(-1)^{\nu} \, q^{-\frac12 \nu^2} \, 
e^{-2\pi i\nu z} \, \vartheta_{11}(\tau,z)
\end{array}\right. $

\noindent
and using Lemma 1.1 and Note 1.1 in \cite{W2022c}, $(A)$ and 
$(B)$ are computed as follows:
{\allowdisplaybreaks
\begin{eqnarray*}
& & \hspace{-12mm}
(A) \,\ = \,\ 
\theta^{(-)}_{\frac12, m+\frac12}
\Big(\tau, \, - \, \dfrac{2m\nu \tau}{m+\frac12}\Big)
\,\ = \,\ 
q^{-\frac14(m+\frac12)(\frac{2m\nu}{m+\frac12})^2}
\theta^{(-)}_{\frac12-2m\nu, m+\frac12}(\tau,0)
\\[2mm]
&=&
(-1)^{\nu} \, q^{-\frac{m^2\nu^2}{m+\frac12}} \, 
\theta^{(-)}_{\nu+\frac12, m+\frac12}(\tau,0)
\\[3mm]
& & \hspace{-12mm}
(B) = \, \frac{
\theta_{\frac12, \, m+\frac12}^{(-)}
\Big(\tau, \, 2z+\dfrac{\nu\tau}{m+\frac12}\Big) \, 
}{
\vartheta_{11}(\tau, \, z + \nu\tau)}
\, = \, 
\frac{
q^{-\frac14(m+\frac12)(\frac{\nu}{m+\frac12})^2}
e^{-\pi i(m+\frac12) \frac{\nu}{m+\frac12} \cdot 2z} \, 
\theta_{\nu+\frac12, \, m+\frac12}^{(-)}(\tau, \, 2z)
}{
(-1)^{\nu} \, 
q^{-\frac12 \nu^2} \, 
e^{-2\pi i\nu z} \, \vartheta_{11}(\tau,z)}
\\[0mm]
&=&
- \, i \, (-1)^{\nu} \, 
q^{\frac{m}{2(m+\frac12)}\nu^2} \, 
\frac{\theta_{\nu+\frac12, m+\frac12}^{(-)}(\tau, \, 2z)}{
\theta_{\frac12, \frac12}^{(-)}(\tau, 2z)}
\end{eqnarray*}}
Substituting these, the equation \eqref{eindef3:qn:2023-1123b1}
becomes as follows:
{\allowdisplaybreaks
\begin{eqnarray*}
& & \hspace{4mm}
(-1)^{\nu} \, q^{-\frac{m^2\nu^2}{m+\frac12}} \, 
\theta^{(-)}_{\nu+\frac12, m+\frac12}(\tau,0) \, 
\Phi^{[m,0]}_1(\tau, \, z+\nu\tau, \, z-\nu\tau, \, 0)
\\[3mm]
& & 
+ \,\ 
q^{-\frac{m\nu}{2(m+\frac12)}} \, 
\sum_{\substack{r \, \in \, \zzz \\[1mm] 0 \,\leq \, r \, < \, 2m}} 
\Big[\sum_{\substack{j, \, p \, \in \, \zzz \\[1mm]
0 \, < \, p \, \leq \, j}} 
-
\sum_{\substack{j, \, p \, \in \, \zzz \\[1mm]
j \, < \, p \, \leq \, 0}} \Big] 
f_{-j, \, r-2pm} \, \theta_{r,m}(\tau, \, 2z)
\\[2mm]
&= &
- \,\ i \,\ \eta(\tau)^3 \, \times \, 
(- i) \, (-1)^{\nu} \, 
q^{\frac{m \nu^2}{2(m+\frac12)}} \, 
\frac{\theta_{\nu+\frac12, m+\frac12}^{(-)}(\tau, \, 2z)}{
\theta_{\frac12, \frac12}^{(-)}(\tau, 2z)}
\end{eqnarray*}}
namely
{\allowdisplaybreaks
\begin{eqnarray}
& & \hspace{-10mm}
(-1)^{\nu} \, q^{-m\nu^2} \, 
\theta^{(-)}_{\nu+\frac12, m+\frac12}(\tau,0)
\Phi^{[m,0]}_1(\tau, \, z+\nu\tau, \, z-\nu\tau, \, 0)
\nonumber
\\[3mm]
& & 
+ \,\ \underbrace{
q^{-\frac{m}{2(m+\frac12)}(\nu^2+\nu)} 
\sum_{\substack{r \, \in \, \zzz \\[1mm] 0 \,\leq \, r \, < \, 2m}} 
\Big[\sum_{\substack{j, \, p \, \in \, \zzz \\[1mm]
0 \, < \, p \, \leq \, j}} 
-
\sum_{\substack{j, \, p \, \in \, \zzz \\[1mm]
j \, < \, p \, \leq \, 0}} \Big] 
f_{-j, \, r-2pm} \, \theta_{r,m}(\tau, \, 2z)}_{\rm (I)}
\nonumber
\\[2mm]
&= &
- \,\ (-1)^{\nu} \, \eta(\tau)^3 \, 
\frac{\theta_{\nu+\frac12, m+\frac12}^{(-)}(\tau, \, 2z)}{
\theta_{0, \frac12}^{(-)}(\tau, 2z)}
\label{indef3:eqn:2023-1123b2}
\end{eqnarray}}
where $f_{-j, \, r-2pm}$ is computed as follows in the case 
$(a,b,c,d)=(\nu,-\nu,0,0)$: 
{\allowdisplaybreaks
\begin{eqnarray*}
& & \hspace{-10mm}
f_{-j, \, r-2pm}\big|_{(z_1,z_2)=(z+\nu\tau, z-\nu\tau)} 
\\[2mm]
&=&\Big[
(-1)^j \, 
q^{(m+\frac12)(-j+\frac{\frac12}{2(m+\frac12)})^2-\frac{1}{4m}(r-2pm)^2} \,
e^{[2\pi ijm+\pi i(r-2pm)](z_1-z_2)}
\Big]_{(z_1,z_2)=(z+\nu\tau, z-\nu\tau)} 
\\[2mm]
&=&
(-1)^j \, 
q^{(m+\frac12)(j-\frac{1}{4(m+\frac12)})^2-\frac{1}{4m}(r-2pm)^2} \,
q^{2jm\nu+(r-2pm)\nu} 
\\[2mm]
&=&
(-1)^j \,
q^{(m+\frac12)(j+\nu-\frac{\nu+\frac12}{2(m+\frac12)})^2
\, - \, m \, (p+\nu-\frac{r}{2m})^2
\, + \, \frac{m\nu(\nu+1)}{2(m+\frac12)}}
\end{eqnarray*}}
Then we have 
$$
{\rm (I)} =
\sum_{\substack{r \, \in \, \zzz \\[1mm] 0 \,\leq \, r \, < \, 2m}} 
\Big[\sum_{\substack{j, \, p \, \in \, \zzz \\[1mm]
0 \, < \, p \, \leq \, j}} 
-
\sum_{\substack{j, \, p \, \in \, \zzz \\[1mm]
j \, < \, p \, \leq \, 0}} \Big] 
(-1)^j \,
q^{(m+\frac12)(j+\nu-\frac{\nu+\frac12}{2(m+\frac12)})^2
\, - \, m \, (p+\nu-\frac{r}{2m})^2} 
\theta_{r,m}(\tau, \, 2z)
$$
so the formula \eqref{indef3:eqn:2023-1123b2} gives
{\allowdisplaybreaks
\begin{eqnarray}
& & \hspace{-7mm}
(-1)^{\nu} \, q^{-m\nu^2} \, 
\theta^{(-)}_{\nu+\frac12, m+\frac12}(\tau,0) \, 
\Phi^{[m,0]}_1(\tau, \, z+\nu\tau, \, z-\nu\tau, \, 0)
\nonumber
\\[3mm]
& & 
+ \,\ 
\sum_{\substack{r \, \in \, \zzz \\[1mm] 0 \,\leq \, r \, < \, 2m}} 
\Big[\sum_{\substack{j, \, p \, \in \, \zzz \\[1mm]
0 \, < \, p \, \leq \, j}} 
-
\sum_{\substack{j, \, p \, \in \, \zzz \\[1mm]
j \, < \, p \, \leq \, 0}} \Big] 
(-1)^j \,
q^{(m+\frac12)(j+\nu-\frac{\nu+\frac12}{2(m+\frac12)})^2
\, - \, m \, (p+\nu-\frac{r}{2m})^2} 
\theta_{r,m}(\tau, \, 2z)
\nonumber
\\[2mm]
&= &
- \,\ (-1)^{\nu} \, \eta(\tau)^3 \, 
\frac{\theta_{\nu+\frac12, m+\frac12}^{(-)}(\tau, \, 2z)}{
\theta_{\frac12, \frac12}^{(-)}(\tau, 2z)}
\label{indef3:eqn:2023-1123b3}
\end{eqnarray}}
\end{subequations}
Replacing $z$ with $\frac{z}{2}$ in \eqref{indef3:eqn:2023-1123b3},
we obtain the formula \eqref{indef3:eqn:2023-1123a},
proving Lemma \ref{indef3:lemma:2023-1123c}.
\end{proof}

\vspace{3mm}

For $m \in \frac12 \nnn$ and $n, \nu \in \zzz$ 
satisfying the conditions
\begin{equation}
0 \leq n <2m \hspace{10mm} \text{and} \hspace{10mm} 0 \leq \nu \leq 2m
\label{indef3:eqn:2023-1124a}
\end{equation} 
we define the functions 
$g^{[m]}_{n,\nu}(\tau)$ and $h^{[m]}_{n, \nu}(\tau,z)$ 
and $G^{\, [m]}_{n,\nu}(\tau,z)$ by the following formulas: 
%
\begin{subequations}
{\allowdisplaybreaks
\begin{eqnarray}
g^{[m]}_{n, \nu}(\tau) &:=& \hspace{-2mm}
\Big[\sum_{\substack{j, \, p \, \in \, \zzz \\[1mm]
0 \, < \, p \, \leq \, j}} 
-
\sum_{\substack{j, \, p \, \in \, \zzz \\[1mm]
j \, < \, p \, \leq \, 0}} \Big] 
(-1)^j \,
q^{(m+\frac12)(j+\nu-\frac{\nu+\frac12}{2(m+\frac12)})^2
\, - \, m \, (p+\nu-\frac{n}{2m})^2} 
\label{indef3:eqn:2023-1124b1}
\\[3mm]
h^{[m]}_{n, \nu}(\tau, z) &:=&
\sum_{j \in \zzz} \, 
\frac{e^{2\pi imjz+\pi inz} \, q^{mj^2+n(j+\nu)}
}{1-e^{2\pi imz} \, q^{2m(j+\nu)}}
\label{indef3:eqn:2023-1124b2}
\\[2mm]
G^{\, [m]}_{n, \nu}(\tau, z) &:=&
g^{[m]}_{n,\nu}(\tau) \, \theta_{n,m}(\tau,z)
\, + \, 
(-1)^{\nu} \, q^{-m\nu^2} \, 
\theta_{\nu+\frac12,m+\frac12}^{(-)}(\tau,0) \, 
h^{[m]}_{n, \nu}(\tau, z)
\label{indef3:eqn:2023-1124b3}
\end{eqnarray}}
\end{subequations}
We claim that the function $G^{\, [m]}_{n, \nu}(\tau, z)$ has the 
following expression:

\medskip

\begin{prop} 
\label{indef3:prop:2023-1124a}
For $m \in \frac12 \nnn$ and $n, \nu \in \zzz$ satisfying 
the condition \eqref{indef3:eqn:2023-1124a}, the following 
formulas hold:
\begin{enumerate}
\item[{\rm 1)}] \,\ ${\displaystyle 
G^{\, [m]}_{n, \nu}(\tau, z)
\,\ = \,\ 
- \, \frac{(-1)^{\nu}}{2m} \, \eta(\tau)^3 \, 
\sum_{\ell=0}^{2m-1} e^{\frac{\pi in\ell}{m}} \,\ 
\dfrac{\theta_{\nu+\frac12, m+\frac12}^{(-)}\Big(\tau, \, z-\dfrac{\ell}{m}\Big)
}{
\theta_{\frac12, \frac12}^{(-)}\Big(\tau, \, z-\dfrac{\ell}{m}\Big)}
}$
\begin{equation}
{ } \hspace{-15mm}
= \,\ 
- \, \frac{(-1)^{\nu}}{2m} \, \eta(\tau)^3 
\sum_{\ell \, \in \zzz/2m\zzz} 
e^{\frac{\pi in\ell}{m}} \,\ 
\dfrac{\theta_{\nu+\frac12, m+\frac12}^{(-)}\Big(\tau, \, z-\dfrac{\ell}{m}\Big)
}{
\theta_{\frac12, \frac12}^{(-)}\Big(\tau, \, z-\dfrac{\ell}{m}\Big)}
\label{indef3:eqn:2023-1124e1}
\end{equation}
\item[{\rm 2)}]  \,\ ${\displaystyle 
\sum_{n=0}^{2m-1}G^{\, [m]}_{n, \nu}(\tau, z)
\,\ = \,\ 
- \, (-1)^{\nu} \, \eta(\tau)^3 \, 
\dfrac{\theta_{\nu+\frac12, m+\frac12}^{(-)}(\tau, \, z)}{
\theta_{\frac12, \frac12}^{(-)}(\tau, \, z)}
}$
\end{enumerate}
\end{prop}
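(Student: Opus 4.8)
The plan is to deduce everything from Lemma \ref{indef3:lemma:2023-1123c}. By the definition \eqref{indef3:eqn:2023-1124b1}, the double sum on the left of \eqref{indef3:eqn:2023-1123a} is precisely $\sum_{r=0}^{2m-1} g^{[m]}_{r,\nu}(\tau)\,\theta_{r,m}(\tau,z)$, so Lemma \ref{indef3:lemma:2023-1123c} reads
$$(-1)^\nu q^{-m\nu^2}\theta^{(-)}_{\nu+\frac12,m+\frac12}(\tau,0)\,\Phi^{[m,0]}_1\Big(\tau,\tfrac{z}{2}+\nu\tau,\tfrac{z}{2}-\nu\tau,0\Big)+\sum_{r=0}^{2m-1} g^{[m]}_{r,\nu}(\tau)\,\theta_{r,m}(\tau,z)=-(-1)^\nu\eta(\tau)^3\frac{\theta^{(-)}_{\nu+\frac12,m+\frac12}(\tau,z)}{\theta^{(-)}_{\frac12,\frac12}(\tau,z)}.$$
To prove 1) I would replace $z$ by $z-\frac{\ell}{m}$ in this identity, multiply by $\frac{1}{2m}e^{\pi in\ell/m}$, and sum over $\ell=0,\dots,2m-1$; the right-hand side then becomes exactly the claimed \eqref{indef3:eqn:2023-1124e1}, so it remains to show the two terms on the left reassemble into $G^{\,[m]}_{n,\nu}(\tau,z)$.

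For the theta term, the quasi-periodicity $\theta_{r,m}(\tau,z-\frac{\ell}{m})=e^{-\pi ir\ell/m}\theta_{r,m}(\tau,z)$ together with the orthogonality $\frac{1}{2m}\sum_{\ell=0}^{2m-1}e^{\pi i(n-r)\ell/m}=\delta_{n,r}$ (valid since $0\le n,r<2m$) collapses $\frac{1}{2m}\sum_\ell e^{\pi in\ell/m}\sum_r g^{[m]}_{r,\nu}(\tau)\theta_{r,m}(\tau,z-\frac{\ell}{m})$ to the single term $g^{[m]}_{n,\nu}(\tau)\theta_{n,m}(\tau,z)$, the first summand of \eqref{indef3:eqn:2023-1124b3}. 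For the $\Phi$ term I expand $\Phi^{[m,0]}_1$ by \eqref{indef:eqn:2023-1120b}: since $e^{2\pi iz_1}=e^{\pi i(z-\ell/m)}q^\nu$ and $e^{2\pi imj(z-\ell/m)}=e^{2\pi imjz}$ for $j\in\zzz$, the $\ell$-sum reduces to $\sum_{\ell=0}^{2m-1}\frac{e^{\pi in\ell/m}}{1-e^{-\pi i\ell/m}e^{\pi iz}q^{j+\nu}}$, to which I apply Note \ref{indef3:note:2023-1120a} 2) with $\omega=e^{-\pi i/m}$, $x=e^{\pi iz}q^{j+\nu}$, $N=2m$ and $k=n$; this yields $\frac{2m\,e^{\pi inz}q^{n(j+\nu)}}{1-e^{2\pi imz}q^{2m(j+\nu)}}$, and summing over $j$ after dividing by $2m$ reproduces exactly $h^{[m]}_{n,\nu}(\tau,z)$ of \eqref{indef3:eqn:2023-1124b2}. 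Combining the two pieces gives $G^{\,[m]}_{n,\nu}(\tau,z)$ on the left, proving 1); the second equality in 1) holds because the summand is unchanged under $\ell\mapsto\ell+2m$.

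Claim 2) then follows at once from 1): summing \eqref{indef3:eqn:2023-1124e1} over $n=0,\dots,2m-1$ and using $\sum_{n=0}^{2m-1}e^{\pi in\ell/m}=2m\,\delta_{\ell,0}$ annihilates every term except $\ell=0$, leaving $-(-1)^\nu\eta(\tau)^3\,\theta^{(-)}_{\nu+\frac12,m+\frac12}(\tau,z)/\theta^{(-)}_{\frac12,\frac12}(\tau,z)$, as asserted.

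The one delicate point is the $\Phi$ computation: choosing the correct primitive $2m$-th root of unity $\omega=e^{-\pi i/m}$ and the variable $x=e^{\pi iz}q^{j+\nu}$ so that Note \ref{indef3:note:2023-1120a} applies verbatim, and verifying the phase cancellation $e^{2\pi imj(z-\ell/m)}=e^{2\pi imjz}$. This is where sign and phase errors are most likely to creep in; everything else is a routine application of root-of-unity orthogonality.
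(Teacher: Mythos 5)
Your proposal is correct and follows essentially the same route as the paper: shift $z\mapsto z-\frac{\ell}{m}$ in Lemma \ref{indef3:lemma:2023-1123c}, average against $e^{\pi in\ell/m}$, collapse the theta term by root-of-unity orthogonality, and identify the averaged $\Phi$-term with $2m\,h^{[m]}_{n,\nu}(\tau,z)$. The only cosmetic difference is that you apply Note \ref{indef3:note:2023-1120a} directly to the expanded $\Phi^{[m,0]}_1$, whereas the paper invokes the prepackaged Lemma \ref{indef3:lemma:2023-1122b} (which encodes the same computation), and your phase checks $e^{2\pi imj(z-\ell/m)}=e^{2\pi imjz}$ and $\omega=e^{-\pi i/m}$, $x=e^{\pi iz}q^{j+\nu}$ are exactly right.
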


\begin{proof}
1) \,\ 
Letting $z \rightarrow z-\frac{\ell}{m}$ \,\ 
$(\ell \in \zzz \,\ {\rm s.t.} \,\ 0 \leq \ell < 2m)$ 
in \eqref{indef3:eqn:2023-1123a} and using Lemma 1.1 in \cite{W2022c},
we have 
{\allowdisplaybreaks
\begin{eqnarray}
& & \hspace{-7mm}
(-1)^{\nu} \, q^{-m\nu^2} \, 
\theta^{(-)}_{\nu+\frac12, m+\frac12}(\tau,0) \, 
\Phi^{[m,0]}_1\Big(\tau, \, 
\frac{z}{2}-\frac{\ell}{2m}+\nu\tau, \, 
\frac{z}{2}-\frac{\ell}{2m}-\nu\tau, \, 0\Big)
\nonumber
\\[3.5mm]
& & \hspace{-8mm}
+ \, e^{-\frac{\pi ir\ell}{m}}
\hspace{-3mm}
\sum_{\substack{r \, \in \, \zzz \\[1mm] 0 \,\leq \, r \, < \, 2m}} 
\Big[\sum_{\substack{j, \, p \, \in \, \zzz \\[1mm]
0 \, < \, p \, \leq \, j}} 
-
\sum_{\substack{j, \, p \, \in \, \zzz \\[1mm]
j \, < \, p \, \leq \, 0}} \Big] 
(-1)^j \,
q^{(m+\frac12)(j+\nu-\frac{\nu+\frac12}{2(m+\frac12)})^2
- \, m \, (p+\nu-\frac{r}{2m})^2} 
\theta_{r,m}(\tau, z)
\nonumber
\\[2mm]
& &\hspace{-10mm}
= \,\ - \,\ (-1)^{\nu} \, \eta(\tau)^3 \, \frac{
\theta_{\nu+\frac12, m+\frac12}^{(-)}\Big(\tau, \, z-\dfrac{\ell}{m}\Big)}{
\theta_{\frac12, \frac12}^{(-)}\Big(\tau, \, z-\dfrac{\ell}{m}\Big)}
\label{indef3:eqn:2023-1124c}
\end{eqnarray}}
Making the sum $\sum_{\ell=0}^{2m-1}e^{\frac{\pi in\ell}{m}} \times 
\eqref{indef3:eqn:2023-1124c}$, for each $n \in \zzz$ such that 
$0 \leq n <2m$, we have
\begin{subequations}
{\allowdisplaybreaks
\begin{eqnarray}
& & \hspace{-7mm}
(-1)^{\nu} \, q^{-m\nu^2} \, 
\theta^{(-)}_{\nu+\frac12, m+\frac12}(\tau,0) \, 
\underbrace{\sum_{\ell=0}^{2m-1}e^{\frac{\pi in\ell}{m}}
\Phi^{[m,0]}_1\Big(\tau, \,\ 
\frac{z}{2}-\frac{\ell}{2m}+\nu\tau, \,\ 
\frac{z}{2}-\frac{\ell}{2m}-\nu\tau, \,\ 0\Big)}_{(A)}
\nonumber
\\[1mm]
& & \hspace{-8mm}
+ \, 2m \, 
\underbrace{\Big[\sum_{\substack{j, \, p \, \in \, \zzz \\[1mm]
0 \, < \, p \, \leq \, j}} 
-
\sum_{\substack{j, \, p \, \in \, \zzz \\[1mm]
j \, < \, p \, \leq \, 0}} \Big] 
(-1)^j \,
q^{(m+\frac12)(j+\nu-\frac{\nu+\frac12}{2(m+\frac12)})^2
\, - \, m \, (p+\nu-\frac{n}{2m})^2} }_{\substack{|| \\[0mm] 
{\displaystyle \hspace{-4mm}
g^{[m]}_{n, \nu}(\tau)
}}}
\theta_{n,m}(\tau, z)
\nonumber
\\[2mm]
& &\hspace{-10mm}
= \,\ - \,\ (-1)^{\nu} \, \eta(\tau)^3 \, 
\sum_{\ell=0}^{2m-1} e^{\frac{\pi in\ell}{m}} \,\ \frac{
\theta_{\nu+\frac12, m+\frac12}^{(-)}\Big(\tau, \, z-\dfrac{\ell}{m}\Big)}{
\theta_{\frac12, \frac12}^{(-)}\Big(\tau, \, z-\dfrac{\ell}{m}\Big)}
\label{indef3:eqn:2023-1124d1}
\end{eqnarray}}
since $\sum_{\ell=0}^{2m-1}e^{\frac{\pi i(n-r)\ell}{m}}
\,\ = \, \left\{
\begin{array}{ccl}
0 & {\rm if} & n-r \ne 0
\\[1mm]
2m & {\rm if} & n-r = 0
\end{array}\right. . $ 
Here $(A)$ is computed by letting $
(z_1, z_2)=(\frac{z}{2}+\nu\tau, \frac{z}{2}-\nu\tau) $ and 
$p \rightarrow n$ in the formula 1) of Lemma 
\ref{indef3:lemma:2023-1122b} as follows:
$$
(A) \, = \, 
2m \sum_{j \in \zzz} 
\frac{e^{2\pi imjz+2\pi in(\frac{z}{2}+\nu\tau)}q^{mj^2+nj}
}{1-e^{4\pi im(\frac{z}{2}+\nu\tau)} \, q^{2mj}}
\,\ = \,\ 
2m \sum_{j \in \zzz} 
\frac{e^{2\pi imjz+\pi inz} 
q^{mj^2+n(j+\nu)}
}{1-e^{2\pi imz} \, q^{2m(j+\nu)}}
$$
Substituting this into \eqref{indef3:eqn:2023-1124d1}, we have 
%
{\allowdisplaybreaks
\begin{eqnarray}
& & \hspace{-7mm}
2m \, 
(-1)^{\nu} \, q^{-m\nu^2} \, 
\theta^{(-)}_{\nu+\frac12, m+\frac12}(\tau,0) \, 
\sum_{j \in \zzz} \, 
\frac{e^{2\pi imjz+\pi inz} \, q^{mj^2+n(j+\nu)}
}{1-e^{2\pi imz} \, q^{2m(j+\nu)}}
\nonumber
\\[3.5mm]
& & 
+ \,\ 2m \, g^{[m]}(\tau) \, \theta_{n,m}(\tau, z)
\nonumber
\\[2mm]
& &\hspace{-10mm}
= \,\ - \,\ (-1)^{\nu} \, \eta(\tau)^3 \, 
\sum_{\ell=0}^{2m-1} e^{\frac{\pi in\ell}{m}} \,\ 
\frac{\theta_{\nu+\frac12, m+\frac12}^{(-)}\Big(\tau, \, z-\dfrac{\ell}{m}\Big)
}{
\theta_{\frac12, \frac12}^{(-)}\Big(\tau, \, z-\dfrac{\ell}{m}\Big)}
\label{indef3:eqn:2023-1124d2}
\end{eqnarray}}
\end{subequations}
proving 1). \,\ 
2) is obtained immediately from 1) since 
{\allowdisplaybreaks
\begin{eqnarray*}
& & \hspace{-10mm}
\sum_{n=0}^{2m-1}\sum_{\ell=0}^{2m-1} 
e^{\frac{\pi in\ell}{m}} \,\ 
\frac{\theta_{\nu+\frac12, m+\frac12}^{(-)}\Big(\tau, z-\dfrac{\ell}{m}\Big)
}{
\theta_{\ast, \frac12}^{(-)}\Big(\tau, z-\dfrac{\ell}{m}\Big)}
\, = 
\sum_{\ell=0}^{2m-1} 
\Bigg(
\underbrace{\sum_{n=0}^{2m-1}e^{\frac{\pi in\ell}{m}}
}_{2m \, \delta_{\ell,0}}\Bigg) 
\frac{\theta_{\nu+\frac12, m+\frac12}^{(-)}\Big(\tau, z-\dfrac{\ell}{m}\Big)}{
\theta_{\ast, \frac12}^{(-)}\Big(\tau, z-\dfrac{\ell}{m}\Big)}
\\[2mm]
& &
= \,\ 
2m \frac{\theta_{\nu+\frac12, m+\frac12}^{(-)}(\tau, z)}{
\theta_{\ast, \frac12}^{(-)}(\tau, z)} \hspace{10mm}
{\rm for} \quad \ast \, = \, 0, \, \tfrac12
\end{eqnarray*}}

\vspace{-10mm}
\end{proof}

\medskip

\begin{note} 
\label{indef3:note:2023-1124a}
For $m \in \frac12 \nnn$ and $n, \nu \in \zzz$ satisfying 
the condition \eqref{indef3:eqn:2023-1124a}, the following 
formula holds:
\begin{equation} 
G^{\, [m]}_{n, \nu}\Big(\tau, z+\frac{c}{m}\Big)
\,\ = \,\ 
e^{\frac{\pi inc}{m}} \, G^{\, [m]}_{n, \nu}(\tau, z) 
\hspace{10mm} ({}^{\forall} c \, \in \, \zzz)
\label{indef3:eqn:2023-1124f1}
\end{equation}
\end{note}

\begin{proof} \,\ 
Letting \, $z \rightarrow z+\frac{c}{m}$ \, 
in \eqref{indef3:eqn:2023-1124e1}, we have 
$$
G^{\, [m]}_{n, \nu}\Big(\tau,z+\frac{c}{m}\Big) 
\,\ = \,\ 
- \, \frac{(-1)^{\nu}}{2m} \, \eta(\tau)^3 \, 
\sum_{\ell \, \in \, \zzz/2m\zzz}
e^{\frac{\pi in\ell}{m}} \,\ \frac{
\theta_{\nu+\frac12, m+\frac12}^{(-)}
\Big(\tau, \, z-\dfrac{\ell-c}{m}\Big)
}{
\theta_{\frac12, \frac12}^{(-)}\Big(\tau, \, z-\dfrac{\ell-c}{m}\Big)}
$$
Putting $\ell-c=:\ell'$, this equation becomes
$$
= \, 
- \, \frac{(-1)^{\nu}}{2m} \, \eta(\tau)^3 \, 
\sum_{\ell' \, \in \, \zzz/2m\zzz}
e^{\frac{\pi in(\ell'+c)}{m}} \,\ \frac{
\theta_{\nu+\frac12, m+\frac12}^{(-)}\Big(\tau, \, z-\dfrac{\ell'}{m}\Big)
}{
\theta_{\frac12, \frac12}^{(-)}\Big(\tau, \, z-\dfrac{\ell'}{m}\Big)}
\,\ = \,\ 
e^{\frac{\pi inc}{m}} \, G^{\, [m]}_{n, \nu}(\tau,z)
$$
proving the formula \eqref{indef3:eqn:2023-1124f1}.
\end{proof}

\medskip



\begin{lemma} 
\label{indef3:lemma:2023-1125a}
Let $m \in \frac12 \nnn$ and $n$ and $\nu$ be no-negative integers satisfying 
the condition \eqref{indef3:eqn:2023-1124a}. Then
\begin{enumerate}
\item[{\rm 1)}] the $S$-transformation of the function 
$G^{\, [m]}_{n, \nu}(\tau,z)$ is given by the folllowing formula:
\begin{subequations}
{\allowdisplaybreaks
\begin{eqnarray}
& & \hspace{-15mm}
G^{\, [m]}_{n,\nu}\Big(-\frac{1}{\tau}, \frac{z}{\tau}\Big)
\,\ = \,\
i \,\ \frac{(-i\tau)^{\frac32}}{2m\sqrt{2m+1}}
\sum_{n'=0}^{2m-1}
\sum_{\nu'=0}^{2m}
\sum_{\ell=0}^{2m-1} 
\nonumber
\\[2mm]
& & \hspace{-5mm}
\times \,\ 
(-1)^{\nu+k} \, 
e^{\frac{\pi in\ell}{m}} \, 
e^{-\frac{\pi i}{m+\frac12}(\nu+\frac12)(\nu'+\frac12)} \, 
e^{\frac{\pi im}{2\tau}z^2} \, e^{-\pi i\ell z} \, 
q^{\frac{\ell^2}{4m}} \, 
G^{\, [m]}_{n',\nu'}\Big(\tau,z-\frac{\ell \tau}{m}\Big)
\label{indef3:eqn:2023-1125a1}
\\[2mm]
& & \hspace{-10mm}
= \,\ 
i \,\ \frac{(-i\tau)^{\frac32}}{2m\sqrt{2m+1}}
\sum_{n'=0}^{2m-1}
\sum_{\nu'=0}^{2m} \, 
\sum_{\ell \in \zzz/2m\zzz} 
\nonumber
\\[2mm]
& & \hspace{-5mm}
\times \,\ 
(-1)^{\nu+\nu'} \, 
e^{\frac{\pi in\ell}{m}} \, 
e^{-\frac{\pi i}{m+\frac12}(\nu+\frac12)(\nu'+\frac12)} \, 
e^{\frac{\pi im}{2\tau}z^2} \, e^{-\pi i\ell z} \, 
q^{\frac{\ell^2}{4m}} \, 
G^{\, [m]}_{n',\nu'}\Big(\tau,z-\frac{\ell \tau}{m}\Big)
\label{indef3:eqn:2023-1125a2}
\end{eqnarray}}
\end{subequations}
\item[{\rm 2)}] In the case $m \in \nnn$, the $T$-transformation 
of the function $G^{\, [m]}_{n, \nu}(\tau,z)$ is given by 
the folllowing formula:
\begin{equation}
G^{\, [m]}_{n, \nu}(\tau+1, z)
\, = \,\ 
e^{\frac{\pi i}{2(m+\frac12)}(\nu+\frac12)^2} \, 
G^{\, [m]}_{n, \nu}(\tau, z)
\label{indef3:eqn:2023-1125a3}
\end{equation}
\end{enumerate}
\end{lemma}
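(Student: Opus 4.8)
The plan is to derive both transformations from the closed quotient expression for $G^{\,[m]}_{n,\nu}$ in part 1) of Proposition \ref{indef3:prop:2023-1124a}, i.e. from \eqref{indef3:eqn:2023-1124e1}, which writes $G^{\,[m]}_{n,\nu}(\tau,z)$ as $-\tfrac{(-1)^\nu}{2m}\eta(\tau)^3$ times a finite sum over $\ell\in\zzz/2m\zzz$ of the ratios $\theta^{(-)}_{\nu+\frac12,m+\frac12}(\tau,z-\tfrac{\ell}{m})\big/\theta^{(-)}_{\frac12,\frac12}(\tau,z-\tfrac{\ell}{m})$. Into this I will insert the modular behaviour of $\eta^3$ and of the theta functions $\theta^{(-)}_{s,M}$ recorded in the cited papers, and then reassemble the right-hand side into $G$-functions by invoking part 2) of the same Proposition. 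The only half-integer levels occurring are $M=m+\tfrac12$ (numerator) and $M=\tfrac12$ (denominator).

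For the $S$-transformation I first set $\tau\mapsto-1/\tau$, $z\mapsto z/\tau$ in \eqref{indef3:eqn:2023-1124e1} and use $\eta(-1/\tau)^3=(-i\tau)^{3/2}\eta(\tau)^3$. The decisive observation is that the theta argument factors as $\tfrac{z}{\tau}-\tfrac{\ell}{m}=\tfrac1\tau\bigl(z-\tfrac{\ell\tau}{m}\bigr)$, so that with $w_\ell:=z-\tfrac{\ell\tau}{m}$ every theta is evaluated at elliptic variable $w_\ell/\tau$ and the $S$-formula applies verbatim. For the numerator (level $m+\tfrac12$) this produces the Gauss sum $\sum_{\nu'=0}^{2m}e^{-\frac{\pi i}{m+\frac12}(\nu+\frac12)(\nu'+\frac12)}\theta^{(-)}_{\nu'+\frac12,m+\frac12}(\tau,w_\ell)$ with prefactor $\tfrac{\sqrt{-i\tau}}{\sqrt{2m+1}}$ and Gaussian $e^{\frac{\pi i(m+\frac12)}{2\tau}w_\ell^2}$; for the denominator (level $\tfrac12$) it gives the single term with prefactor $\sqrt{-i\tau}$, Gaussian $e^{\frac{\pi i}{4\tau}w_\ell^2}$ and Gauss coefficient $e^{-\pi i/2}=-i$. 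In the quotient the factors $\sqrt{-i\tau}$ (and any level-independent universal constant) cancel, the two Gaussians combine to $e^{\frac{\pi im}{2\tau}w_\ell^2}$, the denominator's $-i$ leaves an overall $i$, and only $\tfrac1{\sqrt{2m+1}}$ survives from the normalizations.

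It remains to simplify and reassemble. Expanding $w_\ell=z-\tfrac{\ell\tau}{m}$ gives $e^{\frac{\pi im}{2\tau}w_\ell^2}=e^{\frac{\pi im}{2\tau}z^2}\,e^{-\pi i\ell z}\,q^{\frac{\ell^2}{4m}}$, which reproduces exactly the three exponential factors in \eqref{indef3:eqn:2023-1125a1}. Finally, part 2) of Proposition \ref{indef3:prop:2023-1124a}, read as an identity for the ratio, yields $\theta^{(-)}_{\nu'+\frac12,m+\frac12}(\tau,w_\ell)\big/\theta^{(-)}_{\frac12,\frac12}(\tau,w_\ell)=-\tfrac{(-1)^{\nu'}}{\eta(\tau)^3}\sum_{n'=0}^{2m-1}G^{\,[m]}_{n',\nu'}(\tau,w_\ell)$; substituting this, the $\eta(\tau)^3$ cancels, the two signs merge into $(-1)^{\nu+\nu'}$, and $G^{\,[m]}_{n',\nu'}(\tau,w_\ell)=G^{\,[m]}_{n',\nu'}(\tau,z-\tfrac{\ell\tau}{m})$. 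Collecting the constant $i\,(-i\tau)^{3/2}/(2m\sqrt{2m+1})$ gives \eqref{indef3:eqn:2023-1125a1} (the index $k$ there being $\nu'$), and \eqref{indef3:eqn:2023-1125a2} is the same identity with the $\ell$-range written as $\zzz/2m\zzz$. The main obstacle is precisely this constant bookkeeping in the $S$-step: tracking the cancellation of $\sqrt{-i\tau}$, the emergence of $1/\sqrt{2m+1}$ and of the leftover $i$ from the level-$\tfrac12$ Gauss coefficient, and the sign $(-1)^{\nu+\nu'}$, all while the rewriting $w_\ell=z-\ell\tau/m$ forces the reassembled $G$-functions to land at exactly the shifted argument demanded by the statement.

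For the $T$-transformation, valid when $m\in\nnn$, I set $\tau\mapsto\tau+1$ in \eqref{indef3:eqn:2023-1124e1} and use $\eta(\tau+1)^3=e^{\pi i/4}\eta(\tau)^3$ together with $\theta^{(-)}_{s,M}(\tau+1,z)=e^{\frac{\pi is^2}{2M}}\theta^{(-)}_{s,M}(\tau,z)$. The latter is where integrality enters: for $M=m+\tfrac12$, $s=\nu+\tfrac12$ the per-term phase $e^{2\pi iM(k+\frac{s}{2M})^2}$ splits as $e^{2\pi iMk^2}e^{2\pi isk}e^{\frac{\pi is^2}{2M}}$, and $m\in\nnn$ forces $e^{2\pi iMk^2}=(-1)^k$, cancelling $e^{2\pi isk}=(-1)^k$ so that exactly $e^{\frac{\pi i(\nu+\frac12)^2}{2(m+\frac12)}}$ remains (for half-integer $m$ this fails and extra signs appear). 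The denominator $(s=M=\tfrac12)$ contributes $e^{\pi i/4}$, which cancels the $e^{\pi i/4}$ from $\eta^3$, leaving the single factor $e^{\frac{\pi i(\nu+\frac12)^2}{2(m+\frac12)}}$ and hence \eqref{indef3:eqn:2023-1125a3}.
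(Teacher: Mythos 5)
Your proposal is correct and follows essentially the same route as the paper: apply the $S$- (resp.\ $T$-) transformation to the quotient expression \eqref{indef3:eqn:2023-1124e1}, use the modular behaviour of $\eta^3$ and of $\theta^{(-)}_{\nu+\frac12,m+\frac12}$, $\theta^{(-)}_{\frac12,\frac12}$, and reassemble via part 2) of Proposition \ref{indef3:prop:2023-1124a}. Your constant and sign bookkeeping (the surviving $i$, $1/\sqrt{2m+1}$, $(-1)^{\nu+\nu'}$, and the expansion of $e^{\frac{\pi im}{2\tau}w_\ell^2}$) matches the paper's computation, and you correctly identify the index $k$ in \eqref{indef3:eqn:2023-1125a1} as $\nu'$.
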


\begin{proof} 1) Letting 
$(\tau,z) \rightarrow (-\frac{1}{\tau}, \frac{z}{\tau})$ 
in \eqref{indef3:eqn:2023-1124e1}, one has
\begin{equation}
G^{\, [m]}_{n, \nu}\Big(-\frac{1}{\tau}, \frac{z}{\tau}\Big) 
\,\ = \,\ 
- \, \frac{(-1)^{\nu}}{2m} \, 
\eta\Big(-\dfrac{1}{\tau}\Big)^3 \, 
\sum\limits_{\ell=0}^{2m-1} e^{\frac{\pi in\ell}{m}} \,\ 
\dfrac{\theta_{\nu+\frac12, m+\frac12}^{(-)}
\Big(-\dfrac{1}{\tau}, \, \dfrac{z}{\tau}-\dfrac{\ell}{m}\Big)}{
\theta_{\frac12, \frac12}^{(-)}
\Big(-\dfrac{1}{\tau}, \, \dfrac{z}{\tau}-\dfrac{\ell}{m}\Big)}
\label{indef3:eqn:2023-1125b}
\end{equation}
Noticing, by using Lemma 1.3 in \cite{W2022c}, that
{\allowdisplaybreaks
\begin{eqnarray*}
& & \hspace{-10mm}
\eta(-\tfrac{1}{\tau}) \,\ = \,\ (-i\tau)^{\frac12} \, \eta(\tau) 
\\[2mm]
& & \hspace{-10mm}
\theta_{\nu+\frac12, m+\frac12}^{(-)}
\Big(-\frac{1}{\tau}, \, \frac{z}{\tau}-\frac{\ell}{m}\Big)
\,\ = \,\ 
\frac{(-i\tau)^{\frac12}}{\sqrt{2(m+\frac12)}} \, 
e^{\frac{\pi i}{2\tau}(m+\frac12)z^2} \, 
e^{-\frac{\pi i(m+\frac12)\ell}{m}z} \, 
q^{\frac{\ell^2}{4m^2}(m+\frac12)}
\\[2mm]
& & \hspace{10mm}
\times \,\ 
\sum_{k=0}^{2m} 
e^{-\frac{\pi i(\nu+\frac12) (k+\frac12)}{m+\frac12}} \, 
\theta_{k+\frac12,m+\frac12}^{(-)}\Big(\tau,z-\dfrac{\ell \, \tau}{m}\Big)
\\[2mm]
& & \hspace{-10mm}
\theta_{\frac12, \frac12}^{(-)}
\Big(-\frac{1}{\tau}, \, \frac{z}{\tau}-\frac{\ell}{m}\Big)
= 
- \, i \, (-i\tau)^{\frac12} \, 
e^{\frac{\pi i}{4\tau}z^2} \, 
e^{-\frac{\pi i\ell}{2m}z} 
q^{\frac{\ell^2}{8m^2}} \, 
\theta_{\frac12, \frac12}^{(-)}\Big(\tau,z-\frac{\ell \, \tau}{m}\Big) \, ,
\end{eqnarray*}}
the RHS of the above equation \eqref{indef3:eqn:2023-1125b}
is rewritten as follows:
{\allowdisplaybreaks
\begin{eqnarray*}
& & \hspace{-15mm}
\text{RHS of \eqref{indef3:eqn:2023-1125b}} = \,\ 
- \, \frac{(-1)^{\nu}}{2m} \, 
\eta\Big(-\frac{1}{\tau}\Big)^3 \, 
\sum_{\ell=0}^{2m-1} e^{\frac{\pi in\ell}{m}} \,\ 
\frac{\theta_{\nu+\frac12, m+\frac12}^{(-)}
\Big(-\dfrac{1}{\tau}, \, \dfrac{z}{\tau}-\dfrac{\ell}{m}\Big)}{
\theta_{\frac12, \frac12}^{(-)}
\Big(-\dfrac{1}{\tau}, \, \dfrac{z}{\tau}-\dfrac{\ell}{m}\Big)}
\\[2.5mm]
&=&
- \, i \, (-1)^{\nu} \, 
\frac{(-i\tau)^{\frac32} }{2m\sqrt{2m+1}} \, 
\sum_{\ell=0}^{2m-1} e^{\frac{\pi in\ell}{m}} \, 
e^{\frac{\pi im}{2\tau}z^2} \, e^{-\pi i\ell z} \, 
q^{\frac{\ell^2}{4m}}
\\[2mm]
& &
\times \, \sum_{k=0}^{2m}
e^{-\frac{\pi i}{m+\frac12}(\nu+\frac12)(k+\frac12)}
\eta(\tau)^3 \,\ 
\frac{\theta_{k+\frac12,m+\frac12}^{(-)}(\tau,z-\frac{\ell \tau}{m})
}{
\theta_{\frac12,\frac12}^{(-)}(\tau,z-\frac{\ell \tau}{m})}
\end{eqnarray*}}
which is equal to 
{\allowdisplaybreaks
\begin{eqnarray*}
&=&
i \,\ \frac{(-i\tau)^{\frac32}}{2m\sqrt{2m+1}}
\sum_{n'=0}^{2m-1}
\sum_{k=0}^{2m}
\sum_{\ell=0}^{2m-1} 
\\[2mm]
& & 
\times \,\ 
(-1)^{\nu+k} \, 
e^{\frac{\pi in\ell}{m}} \, 
e^{-\frac{\pi i}{m+\frac12}(\nu+\frac12)(k+\frac12)} \, 
e^{\frac{\pi im}{2\tau}z^2} \, e^{-\pi i\ell z} \, 
q^{\frac{\ell^2}{4m}} \, 
G^{\, [m]}_{n',k}\Big(\tau,z-\frac{\ell \tau}{m}\Big)
\end{eqnarray*}}
by Proposition \ref{indef3:prop:2023-1124a}, proving the formula 
\eqref{indef3:eqn:2023-1125a1}. \,\ 
2) is obtained easily from \eqref{indef3:eqn:2023-1124e1} and 
Lemma 1.4 in \cite{W2022c}.
\end{proof}

\medskip

We note that functions $h^{[m]}_{n,\nu}(\tau,z)$ and 
$G^{\, [m]}_{n,\nu}(\tau,z)$ have the following properties.

\medskip

\begin{note} 
\label{indef3:note:2023-1127a}
For $m \in \frac12 \nnn$ and $n, \nu \in \zzz$ satisfying 
the condition \eqref{indef3:eqn:2023-1124a} and $a,b,a',b' \in \zzz$, 
the following formulas hold:
\begin{enumerate}
\item[{\rm 1)}] \,\ $G^{\, [m]}_{n, \nu}(\tau, \, z+2a\tau+2b)
\,\ = \,\ 
e^{2\pi inb} \, e^{-2\pi imaz} \, q^{-ma^2} \, G^{\, [m]}_{n, \nu}(\tau, z)$
\item[{\rm 2)}] \,\ If \, $a-a' \in 2m\zzz$ \, and \, $b-b' \in 2m \zzz$, then 
\begin{equation}
e^{\pi iaz} \, q^{\frac{a^2}{4m}} \, 
G^{\, [m]}_{n, \nu}\Big(\tau, z+\frac{a\tau}{m}+\frac{b}{m}\Big)
\, = \, 
e^{\pi ia'z} \, q^{\frac{a'{}^2}{4m}} \,
G^{\, [m]}_{n, \nu}\Big(\tau, z+\frac{a'\tau}{m}+\frac{b'}{m}\Big) 
\label{indef3:eqn:2023-1127a1}
\end{equation}
\end{enumerate}
\end{note}

\begin{proof} 1) is obtained easily from \eqref{indef3:eqn:2023-1124e1} 
by calculation using Lemma 1.1 in \cite{W2022c}.
2) follows immediately from 1).
\end{proof}

\medskip


\begin{note} 
\label{indef3:note:2023-1127b}
Let $m \in \frac12 \nnn$ and $n,n',\nu,\nu'$ be integers 
satisfying the conditions
\begin{equation}
0 \leq n, n' <2m, \quad 0 \leq \nu, \nu' \leq 2m \quad 
\text{and} \quad 
n+n'\in 2m \zzz \quad \nu+\nu'=2m
\label{indef3:eqn:2023-1127b1}
\end{equation}
Then
\begin{equation} \hspace{-30mm}
G^{\, [m]}_{n', \nu'}(\tau, -z)
\,\ = \,\ 
(-1)^{2m} \, G^{\, [m]}_{n, \nu}(\tau, z)
\label{indef3:eqn:2023-1127b2}
\end{equation}
\end{note}

\begin{proof} First we consider the case $n+n'=2m$. 
Since $\nu'+\frac12 \, = \, 2m+1-(\nu+\frac12)$, the formula 
\eqref{indef3:eqn:2023-1124e1} gives the following:
{\allowdisplaybreaks
\begin{eqnarray*}
& & \hspace{-10mm}
G^{\, [m]}_{n', \nu'}(\tau, -z)
\,\ = \,\ 
- \, (-1)^{\nu'} \, \eta(\tau)^3 \, 
\sum_{\ell=0}^{2m-1} e^{\frac{\pi in'\ell}{m}} \,\ 
\dfrac{\theta_{\nu'+\frac12, m+\frac12}^{(-)}\Big(\tau, \, -z-\dfrac{\ell}{m}\Big)
}{
\theta_{\frac12, \frac12}^{(-)}\Big(\tau, \, -z-\dfrac{\ell}{m}\Big)}
\\[2mm]
&=&
- \, (-1)^{2m-\nu} \, \eta(\tau)^3 \, 
\sum_{\ell=0}^{2m-1} 
e^{\frac{\pi i(2m-n)\ell}{m}}
\frac{\theta_{2(m+\frac12)-(\nu+\frac12), m+\frac12}^{(-)}
\Big(\tau, \, -z-\dfrac{\ell}{m}\Big)
}{\theta_{\frac12, \frac12}^{(-)}\Big(\tau, \, -z-\dfrac{\ell}{m}\Big)}
\end{eqnarray*}}
which is, by using Lemma 1.2 in \cite{W2022c}, rewritten as
$$
= \,\ 
- \, (-1)^{2m+\nu} \, \eta(\tau)^3 \, 
\sum_{\ell=0}^{2m-1} \, 
e^{-\frac{\pi in\ell}{m}} \,\ 
\frac{\theta_{\nu+\frac12, m+\frac12}^{(-)}
\Big(\tau, \, z+\dfrac{\ell}{m}\Big)
}{\theta_{\frac12, \frac12}^{(-)}\Big(\tau, \, z+\dfrac{\ell}{m}\Big)}
\,\ = \,\ 
(-1)^{2m} \, G^{\, [m]}_{n, \nu}(\tau, z)
$$
proving the formula \eqref{indef3:eqn:2023-1127b2} in the case 
$n+n'=2m$. 
The proof in the case $n=n'=0$ is quite similar. 
\end{proof}

\section{Functions $F^{\, [m](a,b)}_{n, \nu}(\tau,z)$ }


For $m \in \frac12 \nnn$ and $n, \nu \in \zzz$ satisfying 
the condition \eqref{indef3:eqn:2023-1124a} and $a, b \in \zzz$, 
we define the function $F^{\, [m](a,b)}_{n, \nu}(\tau,z)$ by
\begin{equation}
F^{\, [m](a,b)}_{n, \nu}(\tau,z)
\,\ := \,\ 
e^{\pi iaz} \, q^{\frac{1}{4m}a^2} \, 
G^{\, [m]}_{n, \nu}
\Big(\tau, \, z+\frac{a\tau}{m}+\frac{b}{m}\Big)
\label{indef3:eqn:2023-1125c}
\end{equation}

\medskip

\begin{note} 
\label{indef3:note:2023-1128a}
Let $m \in \frac12 \nnn$ and $n$ and $\nu$ be integers satisfying 
the condition \eqref{indef3:eqn:2023-1124a}. Then 
\begin{enumerate}
\item[{\rm 1)}] \, the following formulas hold for $a, b, c \in \zzz$.
\begin{enumerate}
\item[{\rm (i)}] \,\ $F^{\, [m](a,b+c)}_{n,\nu}(\tau,z)
\,\ = \,\ 
e^{\frac{\pi inc}{m}} \, F^{\, [m](a,b)}_{n,\nu}(\tau,z)$
\item[{\rm (ii)}] \,\ $F^{\, [m](a,b)}_{n,\nu}(\tau,z)
\,\ = \,\ 
e^{\frac{\pi inb}{m}} \, F^{\, [m](a,0)}_{n,\nu}(\tau,z)$
\end{enumerate}
\item[{\rm 2)}] \,\ If integers $a, b, a', b'$ satisfy 
$a-a', \, b-b' \in 2m\zzz$, then 
$$
F^{\, [m](a,b)}_{n,\nu}(\tau,z)
\,\ = \,\ F^{\, [m](a',b')}_{n,\nu}(\tau,z)
$$
Namely \, $F^{\, [m](a,b)}_{n,\nu}(\tau,z)$ \, is defined for
$(a, b) \in (\zzz/2m\zzz)^2$.
\end{enumerate}
\end{note}

\begin{proof} 1) (resp. 2)) follows immediately 
from Note \ref{indef3:note:2023-1124a} 
(resp. Note \ref{indef3:note:2023-1127a}).
\end{proof}

\medskip



\begin{prop}  
\label{indef3:prop:2023-1128a}
Let $m \in \frac12 \nnn$ and $n$ and $\nu$ be integers satisfying 
the condition \eqref{indef3:eqn:2023-1124a} and 
$(a, b) \in (\zzz/2m\zzz)^2$. Then
\begin{enumerate}
\item[{\rm 1)}]  \,\ the $S$-transformation of 
$F^{\, [m](a,b)}_{n, \nu}(\tau,z)$ is given by the 
following formula:
\begin{subequations}
\begin{enumerate}
\item[{\rm (i)}] \,\ $F^{\, [m](a,b)}_{n,\nu}
\Big(-\dfrac{1}{\tau}, \dfrac{z}{\tau}\Big)
\, = \,\ 
\dfrac{i \, (-i\tau)^{\frac32}}{2m\sqrt{2m+1}} \, 
e^{\frac{\pi iaz}{\tau}} \, 
e^{-\frac{\pi ia^2}{2m\tau}} \, 
e^{\frac{\pi im}{2\tau}(z-\frac{a}{m})^2} 
\sum\limits_{n'=0}^{2m-1}
\sum\limits_{\nu'=0}^{2m} \, 
\sum\limits_{\ell \, \in \, \zzz/2m\zzz} $
\begin{equation} 
{} \hspace{-10mm}
\times \,\ 
(-1)^{\nu+\nu'} \, 
e^{-\frac{2\pi i}{2m+1}(\nu+\frac12)(\nu'+\frac12)} \, 
e^{\frac{\pi in(b-\ell)}{m}} \, 
e^{-\frac{\pi i\ell a}{m}} \, 
F^{\, [m](\ell, -a)}_{n',\nu'}(\tau,z)
\label{indef3:eqn:2023-1128a1}
\end{equation}
\item[{\rm (ii)}] \,\ $F^{\, [m](a,0)}_{n,\nu}
\Big(-\dfrac{1}{\tau}, \dfrac{z}{\tau}\Big)
\, = \,\ 
\dfrac{i \, (-i\tau)^{\frac32}}{2m\sqrt{2m+1}} \, 
e^{\frac{\pi iaz}{\tau}} \, 
e^{-\frac{\pi ia^2}{2m\tau}} \, 
e^{\frac{\pi im}{2\tau}(z-\frac{a}{m})^2} 
\sum\limits_{n'=0}^{2m-1}
\sum\limits_{\nu'=0}^{2m} \, 
\sum\limits_{\ell \, \in \, \zzz/2m\zzz} $
\begin{equation} 
{} \hspace{-10mm}
\times \,\ 
(-1)^{\nu+\nu'} \, 
e^{-\frac{2\pi i}{2m+1}(\nu+\frac12)(\nu'+\frac12)} \, 
e^{-\frac{\pi in'a}{m}} \, 
e^{-\frac{\pi i(n+a)\ell}{m}} \, 
F^{\, [m](\ell, 0)}_{n',\nu'}(\tau,z)
\label{indef3:eqn:2023-1128a2}
\end{equation}
\end{enumerate}
\end{subequations}
\item[{\rm 2)}]  \,\ In particular, the $S$-transformation of 
$F^{\, [m](a,0)}_{n, \nu}(\tau,0)$ is given as follows:
{\allowdisplaybreaks
\begin{eqnarray}
& & \hspace{-20mm}
F^{\, [m](a,0)}_{n,\nu}
\Big(-\dfrac{1}{\tau}, 0\Big)
\, = \,\ 
\dfrac{i \, (-i\tau)^{\frac32}}{2m\sqrt{2m+1}} \, 
\sum\limits_{n'=0}^{2m-1}
\sum\limits_{\nu'=0}^{2m} \, 
\sum\limits_{\ell \, \in \, \zzz/2m\zzz} 
\nonumber
\\[2mm]
& &
{} \hspace{-10mm}
\times \,\ 
(-1)^{\nu+\nu'} \, 
e^{-\frac{2\pi i}{2m+1}(\nu+\frac12)(\nu'+\frac12)} \, 
e^{-\frac{\pi in'a}{m}} \, 
e^{-\frac{\pi i(n+a)\ell}{m}} \, 
F^{\, [m](\ell, 0)}_{n',\nu'}(\tau,0)
\label{indef3:eqn:2023-1128a3}
\end{eqnarray}}
\item[{\rm 3)}]  \,\ In the case $m \in \nnn$, the 
$T$-transformation of $F^{\, [m](a,b)}_{n, \nu}(\tau,z)$ is given 
by the following formula:
\begin{equation}
F^{\, [m](a,b)}_{n,\nu}(\tau+1,z)
\,\ = \,\ 
e^{\frac{\pi i}{2m}[(n+a)^2-n^2] \, + \, 
\frac{\pi i}{2(m+\frac12)}(\nu+\frac12)^2} \, 
F^{\, [m](a,b)}_{n,\nu}(\tau,z)
\label{indef3:eqn:2023-1128a4}
\end{equation}
\end{enumerate}
\end{prop}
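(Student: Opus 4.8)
The whole proposition reduces to the transformation laws for $G^{\,[m]}_{n,\nu}$ already established in Lemma \ref{indef3:lemma:2023-1125a}, combined with the elementary shift identities of Notes \ref{indef3:note:2023-1124a}, \ref{indef3:note:2023-1127a} and \ref{indef3:note:2023-1128a}; the only real work is unwinding the definition \eqref{indef3:eqn:2023-1125c} and bookkeeping the accumulated exponential factors. Thus the plan is not to rederive anything about theta functions, but to transport the known behaviour of $G$ through the twisting $F^{\,[m](a,b)}_{n,\nu}(\tau,z)=e^{\pi iaz}q^{a^2/4m}G^{\,[m]}_{n,\nu}(\tau,z+\tfrac{a\tau}{m}+\tfrac{b}{m})$.

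For 1)(i) I would substitute $(\tau,z)\to(-1/\tau,z/\tau)$ directly in \eqref{indef3:eqn:2023-1125c}. The prefactor $e^{\pi iaz}q^{a^2/4m}$ becomes $e^{\pi iaz/\tau}e^{-\pi ia^2/2m\tau}$, and the second argument of $G$ becomes $\tfrac{1}{\tau}(z-\tfrac{a}{m})+\tfrac{b}{m}$. First I would peel off the rational shift $\tfrac{b}{m}$ by Note \ref{indef3:note:2023-1124a}, producing a factor $e^{\pi inb/m}$, and then apply the $S$-transform \eqref{indef3:eqn:2023-1125a2} of $G$ with $z$ replaced by $w:=z-\tfrac{a}{m}$. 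This yields a triple sum over $n',\nu',\ell$ whose Gaussian factor is $e^{\pi imw^2/2\tau}=e^{\pi im(z-a/m)^2/2\tau}$, exactly the prefactor asserted in \eqref{indef3:eqn:2023-1128a1}, while the summand contains $G^{\,[m]}_{n',\nu'}(\tau,w-\tfrac{\ell\tau}{m})=G^{\,[m]}_{n',\nu'}(\tau,z-\tfrac{\ell\tau}{m}-\tfrac{a}{m})$. Recognizing this last quantity as $e^{\pi i\ell z}q^{-\ell^2/4m}F^{\,[m](-\ell,-a)}_{n',\nu'}(\tau,z)$ via \eqref{indef3:eqn:2023-1125c}, I would reindex $\ell\mapsto-\ell$ (legitimate since $\ell$ runs over $\zzz/2m\zzz$) to turn $F^{\,[m](-\ell,-a)}$ into $F^{\,[m](\ell,-a)}$ and collect phases; the surviving exponentials then telescope to $e^{\pi in(b-\ell)/m}e^{-\pi i\ell a/m}$, giving \eqref{indef3:eqn:2023-1128a1}.

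Part 1)(ii) is 1)(i) specialized to $b=0$, after which each $F^{\,[m](\ell,-a)}_{n',\nu'}$ is rewritten as $e^{-\pi in'a/m}F^{\,[m](\ell,0)}_{n',\nu'}$ by Note \ref{indef3:note:2023-1128a} 1)(ii); combining $e^{-\pi in\ell/m}e^{-\pi i\ell a/m}=e^{-\pi i(n+a)\ell/m}$ produces \eqref{indef3:eqn:2023-1128a2}. Part 2) follows by evaluating 1)(ii) at $z=0$: the Gaussian factors $e^{-\pi ia^2/2m\tau}$ and $e^{\pi im(0-a/m)^2/2\tau}=e^{\pi ia^2/2m\tau}$ cancel, leaving precisely the prefactor of \eqref{indef3:eqn:2023-1128a3}. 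For the $T$-transform 3) I would substitute $\tau\to\tau+1$ in \eqref{indef3:eqn:2023-1125c}: the prefactor acquires $e^{\pi ia^2/2m}$, the argument of $G$ becomes $z+\tfrac{a\tau}{m}+\tfrac{a+b}{m}$, and applying the $T$-law \eqref{indef3:eqn:2023-1125a3} for $G$ together with the integer shift $\tfrac{a}{m}$ (Note \ref{indef3:note:2023-1124a}, factor $e^{\pi ina/m}$) returns $F^{\,[m](a,b)}_{n,\nu}(\tau,z)$ times $e^{\pi ia^2/2m}e^{\pi ina/m}e^{\pi i(\nu+\frac12)^2/2(m+\frac12)}$. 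The identity $\tfrac{a^2}{2m}+\tfrac{na}{m}=\tfrac{(n+a)^2-n^2}{2m}$ then gives \eqref{indef3:eqn:2023-1128a4}.

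The substantive step is 1)(i); everything else is a formal specialization of it. The main obstacle there is purely organizational: keeping the several exponential factors (the two Gaussians, the $q^{\pm\ell^2/4m}$, and the linear phases in $\ell$, $a$, $b$, $z$) aligned through the $\tfrac{b}{m}$-peeling, the $S$-transform of $G$, the $F$-recognition, and the $\ell\mapsto-\ell$ reindexing, so that they collapse exactly to $e^{\pi in(b-\ell)/m}e^{-\pi i\ell a/m}$. One should also note at the outset that $\tfrac{\pi i}{m+\frac12}=\tfrac{2\pi i}{2m+1}$, which reconciles the index phase appearing in \eqref{indef3:eqn:2023-1125a2} with the form written in \eqref{indef3:eqn:2023-1128a1}.
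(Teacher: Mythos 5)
Your proposal is correct and follows essentially the same route as the paper: everything is transported from the $S$- and $T$-laws of $G^{\,[m]}_{n,\nu}$ in Lemma \ref{indef3:lemma:2023-1125a} through the definition \eqref{indef3:eqn:2023-1125c}, with Notes \ref{indef3:note:2023-1124a} and \ref{indef3:note:2023-1128a} absorbing the residual shifts. The only (immaterial) difference is that you peel off the $\tfrac{b}{m}$-shift before applying \eqref{indef3:eqn:2023-1125a2}, whereas the paper carries the full argument $z-\tfrac{a}{m}+\tfrac{b\tau}{m}$ through the $S$-transform and reindexes $\ell\mapsto b-\ell$ at the end; both bookkeepings land on the same phases.
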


\begin{proof} 1) \,\ By the $S$-transformation formula 
\eqref{indef3:eqn:2023-1125a2} of $G^{\, [m]}_{n,\nu}$
in Lemma \ref{indef3:lemma:2023-1125a}, one has
{\allowdisplaybreaks
\begin{eqnarray*}
& & \hspace{-10mm}
F^{\, [m](a,b)}_{n,\nu}
\Big(-\frac{1}{\tau}, \frac{z}{\tau}\Big)
\,\ = \,\ 
e^{\frac{\pi iaz}{\tau}} \, 
e^{-\frac{2\pi i}{\tau} \cdot \frac{a^2}{4m}} \, 
G^{\, [m]}_{n,\nu}\Big(-\frac{1}{\tau}, \, 
\frac{z}{\tau}-\frac{a}{m\tau}+\frac{b}{m}
\Big)
\nonumber
\\[2mm]
& & \hspace{-10mm}
= \,\ 
e^{\frac{\pi iaz}{\tau}} \, 
e^{-\frac{\pi ia^2}{2m\tau}} \, 
\times \, 
\frac{i \, (-i\tau)^{\frac32}}{2m\sqrt{2m+1}}
\sum_{n'=0}^{2m-1}
\sum_{\nu'=0}^{2m} \, 
\sum_{\ell \in \zzz/2m\zzz} 
(-1)^{\nu+\nu'} \, 
e^{\frac{\pi in\ell}{m}} \, 
e^{-\frac{\pi i}{m+\frac12}(\nu+\frac12)(\nu'+\frac12)} \, 
\nonumber
\\[2mm]
& & 
\times \,\ 
e^{\frac{\pi im}{2\tau}(z-\frac{a}{m}+\frac{b\tau}{m})^2}
 \, 
e^{-\pi i\ell (z-\frac{a}{m}+\frac{b\tau}{m})} \, 
q^{\frac{\ell^2}{4m}} \, 
G^{\, [m]}_{n',\nu'}\Big(\tau,
z-\frac{a}{m}+\frac{b\tau}{m}-\frac{\ell \tau}{m}\Big)
\\[2mm]
& & \hspace{-10mm}
= \, 
\frac{i \, (-i\tau)^{\frac32}}{2m\sqrt{2m+1}} \, 
e^{\frac{\pi iaz}{\tau}} \, 
e^{-\frac{\pi ia^2}{2m\tau}} 
\sum_{n'=0}^{2m-1}
\sum_{\nu'=0}^{2m} \, 
\sum_{\ell \in \zzz/2m\zzz} 
(-1)^{\nu+\nu'} \, 
e^{\frac{\pi in\ell}{m}} \, 
e^{-\frac{\pi i}{m+\frac12}(\nu+\frac12)(\nu'+\frac12)}
\nonumber
\\[2mm]
& & 
\times \,\ 
e^{\frac{\pi i(\ell-b) a}{m}} \, 
e^{\frac{\pi im}{2\tau}(z-\frac{a}{m})^2} \, 
\underbrace{e^{\pi i(b-\ell)z} \, 
q^{\frac{1}{4m}(b-\ell)^2} \, 
G^{\, [m]}_{n',\nu'}\Big(\tau,
z+\frac{(b-\ell)\tau}{m}-\frac{a}{m}\Big)}_{\substack{|| \\[0mm] 
{\displaystyle F^{\, [m](b-\ell, -a)}_{n',\nu'}(\tau,z)
}}}
\end{eqnarray*}}
Putting $b-\ell=:\ell'$, this becomes
{\allowdisplaybreaks
\begin{eqnarray*}
& & \hspace{-10mm}
= \,\ 
\frac{i \, (-i\tau)^{\frac32}}{2m\sqrt{2m+1}} \,\ 
e^{\frac{\pi iaz}{\tau}} \, 
e^{-\frac{\pi ia^2}{2m\tau}} 
\sum_{n'=0}^{2m-1}
\sum_{\nu'=0}^{2m}
\sum_{\ell' \in \zzz/2m\zzz} 
(-1)^{\nu+\nu'} \, 
\nonumber
\\[2mm]
& & 
\times \,\ 
e^{\frac{\pi in(b-\ell')}{m}} \, 
e^{-\frac{\pi i\ell' a}{m}} \, 
e^{-\frac{2\pi i}{2m+1}(\nu+\frac12)(\nu'+\frac12)} \, 
e^{\frac{\pi im}{2\tau}(z-\frac{a}{m})^2} \, 
F^{\, [m](\ell', -a)}_{n',\nu'}(\tau,z)
\end{eqnarray*}}
proving \eqref{indef3:eqn:2023-1128a1}. 
The formula \eqref{indef3:eqn:2023-1128a2} is obtained from 
\eqref{indef3:eqn:2023-1128a1} and Note 
\ref{indef3:note:2023-1128a}. \, 2) follows immediately from 
\eqref{indef3:eqn:2023-1128a2}.

\medskip

\noindent 
3) \,\ By the $T$-transformation formula 
\eqref{indef3:eqn:2023-1125a3} of $G^{\, [m]}_{n,\nu}$
in Lemma \ref{indef3:lemma:2023-1125a}, one has
{\allowdisplaybreaks
\begin{eqnarray*}
& & \hspace{-15mm}
F^{\, [m](a,b)}_{n,\nu}(\tau+1,z) 
\, = \, 
e^{\pi iaz} \, 
e^{2\pi i (\tau+1)\frac{a^2}{4m}} \, 
G^{\, [m]}_{n, \nu}
\Big(\tau+1, \, z+\dfrac{a}{m}(\tau+1)+\dfrac{b}{m}\Big)
\\[2mm]
&=&
e^{\frac{\pi ia^2}{2m}} \, 
e^{\frac{\pi i}{2(m+\frac12)}(\nu+\frac12)^2} \hspace{-13mm} %
\underbrace{e^{\pi iaz} \, q^{\frac{a^2}{4m}} \, 
G^{\, [m]}_{n, \nu}
\Big(\tau, \, z+\dfrac{a\tau}{m}+\dfrac{b+a}{m}\Big)}_{\substack{|| 
\\[-1mm] {\displaystyle \hspace{20mm} %
F^{\, [m](a,b+a)}_{n,\nu}(\tau,z) 
\, = \, 
e^{\frac{\pi ina}{m}} F^{\, [m](a,b)}_{n,\nu}(\tau,z)
}}}
\\[2mm]
&=& 
e^{\frac{\pi i}{2m}(a^2+2na)} \, 
e^{\frac{\pi i}{2(m+\frac12)}(\nu+\frac12)^2} \, 
F^{\, [m](a,b)}_{n,\nu}(\tau,z)
\end{eqnarray*}}
proving \eqref{indef3:eqn:2023-1128a4}. 
\end{proof}

\medskip

\begin{note} 
\label{indef3:note:2023-1203a}
Let $m \in \frac12 \nnn$ and $n,n',\nu, \nu'$ be integers satisfying
the condition \eqref{indef3:eqn:2023-1127b1}. Then the following 
formula holds for $a \in \zzz/2m\zzz$. 
\begin{equation}
F^{\, [m](a,0)}_{n,\nu}(\tau,0) 
\,\ = \,\ 
(-1)^{2m} \, F^{\, [m](-a,0)}_{n',\nu'}(\tau,0)
\label{indef3:eqn:2023-1203a1}
\end{equation}
\end{note}

\begin{proof}  This formula follows from \eqref{indef3:eqn:2023-1125c} 
and Notes \ref{indef3:note:2023-1127a} and \ref{indef3:note:2023-1127b}.
\end{proof}

\medskip

From Proposition \ref{indef3:prop:2023-1128a}, we obtain the 
following:

\medskip

\begin{thm}
\label{indef3:thm:2023-1202a}
For $m \in \nnn$, the $\ccc$-linear span of 
$$
\Bigg\{F^{\, [m](a,0)}_{n, \nu}(\tau,0) \,\ ; \,\ 
\begin{array}{l}
n, \, \nu \in \zzz_{\geq 0} \quad \text{and} \quad 
a \in \zzz/2m\zzz \\[1mm]
\text{such that} \quad n < 2m, \,\ \nu \leq 2m
\end{array} \Bigg\}
$$
is $SL_2(\zzz)$-invariant.
\end{thm}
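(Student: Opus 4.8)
The plan is to reduce the problem to the two standard generators $S:\tau\mapsto-\frac1\tau$ and $T:\tau\mapsto\tau+1$ of $SL_2(\zzz)$ and to read off the required closure directly from Proposition \ref{indef3:prop:2023-1128a}, which already records both the $S$- and $T$-transformations of the generating functions $F^{\,[m](a,0)}_{n,\nu}(\tau,0)$. Write $V$ for the $\ccc$-linear span in the statement. Because the index set $\{(n,\nu,a): 0\le n<2m,\ 0\le\nu\le 2m,\ a\in\zzz/2m\zzz\}$ is finite, $V$ is finite dimensional, and because each modular transformation acts on functions by an invertible (weight $\tfrac32$) slash operator, it suffices to prove the two inclusions $S\cdot V\subseteq V$ and $T\cdot V\subseteq V$; a dimension count then upgrades these inclusions to equalities, and since $S$ and $T$ generate $SL_2(\zzz)$, the invariance of $V$ under the full group follows.

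For the $T$-direction I would simply specialize the formula \eqref{indef3:eqn:2023-1128a4} of Proposition \ref{indef3:prop:2023-1128a} to $b=0$ and $z=0$, which (for $m\in\nnn$, exactly the hypothesis of the theorem) reads
$$
F^{\,[m](a,0)}_{n,\nu}(\tau+1,0)
= e^{\frac{\pi i}{2m}[(n+a)^2-n^2]+\frac{\pi i}{2(m+\frac12)}(\nu+\frac12)^2}\,
F^{\,[m](a,0)}_{n,\nu}(\tau,0).
$$
Thus each generator of $V$ is merely rescaled by a root of unity under $T$, so $T\cdot V\subseteq V$ is immediate and the generating functions in fact form a simultaneous eigenbasis for the $T$-action.

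The substance of the argument is the $S$-direction, for which I would invoke part 2) of Proposition \ref{indef3:prop:2023-1128a}, namely \eqref{indef3:eqn:2023-1128a3}. That formula expresses $F^{\,[m](a,0)}_{n,\nu}\!\left(-\frac1\tau,0\right)$ as the universal factor $\frac{i\,(-i\tau)^{3/2}}{2m\sqrt{2m+1}}$ times a finite $\ccc$-linear combination of the functions $F^{\,[m](\ell,0)}_{n',\nu'}(\tau,0)$. The one point that genuinely needs checking is that the summation ranges appearing in \eqref{indef3:eqn:2023-1128a3}, namely $0\le n'\le 2m-1$, $0\le\nu'\le 2m$ and $\ell\in\zzz/2m\zzz$, coincide exactly with the index set defining $V$; this is immediate by inspection, since $\ell$ plays the role of the second superscript $a$ and $(n',\nu')$ run over the same admissible range as $(n,\nu)$. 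Consequently, modulo the single weight $\tfrac32$ automorphy factor $(-i\tau)^{3/2}$ which occurs identically for every generator and is therefore absorbed into the slash normalization, the $S$-image of each $F^{\,[m](a,0)}_{n,\nu}(\tau,0)$ again lies in $V$, giving $S\cdot V\subseteq V$.

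The main obstacle is thus not computational but bookkeeping: one must fix the weight $\tfrac32$ slash action (with its multiplier) once and for all, so that the common factor $(-i\tau)^{3/2}$ in the $S$-formula and the trivial factor in the $T$-formula are interpreted consistently across both generators. Once this normalization is pinned down, the two inclusions $S\cdot V\subseteq V$ and $T\cdot V\subseteq V$, together with the fact that $S$ and $T$ generate $SL_2(\zzz)$ and that $V$ is finite dimensional, complete the proof of the asserted $SL_2(\zzz)$-invariance.
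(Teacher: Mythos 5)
Your proposal is correct and follows essentially the same route as the paper, which derives the theorem directly from Proposition \ref{indef3:prop:2023-1128a}: formula \eqref{indef3:eqn:2023-1128a3} shows that the $S$-image of each generator is a $\ccc$-linear combination of generators over the same index set (up to the common automorphy factor), and formula \eqref{indef3:eqn:2023-1128a4} shows each generator is a $T$-eigenvector, so invariance under the generators $S$ and $T$ of $SL_2(\zzz)$ gives the claim. Your additional remarks on fixing the weight $\tfrac32$ slash normalization and on finite-dimensionality are sensible bookkeeping but add nothing beyond what the paper's one-line derivation already contains.
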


\medskip

In order to write the function $F^{\, [m](a,0)}_{n, \nu}(\tau,0)$ 
explicitly, we note the following:

\medskip


\begin{note} 
\label{indef3:note:2023-1130a}
Let $m \in \frac12 \nnn$ and $n$ and $\nu$ be integers 
satisfying \eqref{indef3:eqn:2023-1124a}. Then 
\begin{subequations}
\begin{enumerate}
\item[{\rm 1)}] \,\ $h^{[m]}_{n, \nu}(\tau, z) 
\,\ = \,\ 
e^{\pi i(n-2m\nu)z} \, q^{m\nu^2} \, 
\sum\limits_{j \in \zzz} \, 
\dfrac{e^{2\pi imjz} \, q^{mj^2+(n-2m\nu)j}}{1-e^{2\pi imz} \, q^{2mj}}$ 
\begin{equation}
= \,\ e^{\pi i(n-2m\nu)z} \, q^{m\nu^2} \, 
\Big[
\sum_{\substack{j, \, k \, \in \, \zzz \\[1mm] 
j \, \geq \, k \, \geq \, 0}}
-
\sum_{\substack{j, \, k \, \in \, \zzz \\[1mm] j \, < \, k \, < \, 0}}
\Big] \, 
e^{2\pi imjz} \, q^{m(j-\nu+\frac{n}{2m})^2 \, - \, 
m(k-\nu+\frac{n}{2m})^2}
\label{indef3:eqn:2023-1130a1}
\end{equation}
\item[{\rm 2)}] \,\ For ${}^{\forall}a \in \zzz$, the following formula holds:
\begin{equation}
h^{[m]}_{n, \nu}\Big(\tau, \, \frac{a\tau}{m}\Big)
\, = \, 
q^{m\nu^2-\frac{a^2}{4m}} \, 
\Big[
\sum_{\substack{j, \, k \, \in \, \zzz \\[1mm] 
j \, \geq \, k \, \geq \, 0}}
-
\sum_{\substack{j, \, k \, \in \, \zzz \\[1mm] j \, < \, k \, < \, 0}}
\Big] \, 
q^{m \, (j-\nu+\frac{n+a}{2m} )^2 \, - \, m(k-\nu+\frac{n}{2m})^2}
\label{indef3:eqn:2023-1130a2}
\end{equation}
\end{enumerate}
\end{subequations}
\end{note}

\begin{proof} These formulas can be checked by easy calculation.
\end{proof}

\medskip

Then we obtain an explicit formula for $F^{\, [m](a,0)}_{n, \nu}(\tau, \, 0)$
as follows:

\medskip


\begin{note} 
\label{indef3:note:2023-1130b}
Let $m \in \frac12 \nnn$ and $n$ and $\nu$ be integers 
satisfying \eqref{indef3:eqn:2023-1124a}. Then the following 
formulas hold for ${}^{\forall}a \in \zzz$:
\begin{subequations}
\begin{enumerate}
\item[{\rm 1)}] \,\ $G^{\, [m]}_{n, \nu}\Big(\tau, \, \dfrac{a\tau}{m}\Big)
\,\ = \,\ 
q^{\frac{1}{4m}[(n+a)^2-a^2]} \, 
\vartheta_{00}(2m\tau, \, (n+a)\tau) \, g^{[m]}_{n,\nu}(\tau)$
{\allowdisplaybreaks
\begin{eqnarray}
& & \hspace{-10mm}
+ \,\ (-1)^{\nu} \, 
q^{-\frac{a^2}{4m}} \, 
q^{\frac{1}{4(m+\frac12)}(\nu+\frac12)^2} \, 
\vartheta_{01}(2(m+\tfrac12)\tau, \, (\nu+\tfrac12)\tau)
\nonumber
\\[3mm]
& & 
\times \,\ \Big[
\sum_{\substack{j, \, k \, \in \, \zzz \\[1mm] 
j \, \geq \, k \, \geq \, 0}}
-
\sum_{\substack{j, \, k \, \in \, \zzz \\[1mm] j \, < \, k \, < \, 0}}
\Big] \, 
q^{m \, (j-\nu+\frac{n+a}{2m} )^2 \, - \, m(k-\nu+\frac{n}{2m})^2}
\label{indef3:eqn:2023-1130b1}
\end{eqnarray}}
\item[{\rm 2)}] \,\ $F^{\, [m](a,0)}_{n, \nu}(\tau, \, 0)
\,\ = \,\ 
q^{\frac{1}{4m}(n+a)^2} \, 
\vartheta_{00}(2m\tau, \, (n+a)\tau) \, g^{[m]}_{n,\nu}(\tau)$
{\allowdisplaybreaks
\begin{eqnarray}
& & \hspace{-10mm}
+ \,\ (-1)^{\nu} \, 
q^{\frac{1}{4(m+\frac12)}(\nu+\frac12)^2} \, 
\vartheta_{01}(2(m+\tfrac12)\tau, \, (\nu+\tfrac12)\tau)
\nonumber
\\[3mm]
& & 
\times \,\ \Big[
\sum_{\substack{j, \, k \, \in \, \zzz \\[1mm] 
j \, \geq \, k \, \geq \, 0}}
-
\sum_{\substack{j, \, k \, \in \, \zzz \\[1mm] j \, < \, k \, < \, 0}}
\Big] \, 
q^{m \, (j-\nu+\frac{n+a}{2m} )^2 \, - \, m(k-\nu+\frac{n}{2m})^2}
\label{indef3:eqn:2023-1130b2}
\end{eqnarray}}
Note that the Mumford's theta functions 
$\vartheta_{00}(2m\tau, \, (n+a)\tau)$ and 
$\vartheta_{01}(2(m+\tfrac12)\tau, \, (\nu+\tfrac12)\tau)$
have product expression.
\end{enumerate}
\end{subequations}
\end{note}

\begin{proof} 1) \,\ By \eqref{indef3:eqn:2023-1124b3} and 
\eqref{indef3:eqn:2023-1130a2}, one has
{\allowdisplaybreaks
\begin{eqnarray*}
& & \hspace{-15mm}
G^{\, [m]}_{n,\nu}\Big(\tau, \frac{a\tau}{m}\Big) 
\, = \, 
g^{[m]}_{n,\nu}(\tau) \, \theta_{n,m}\Big(\tau, \frac{a\tau}{m}\Big)
\, + \,
(-1)^{\nu} \, 
q^{-m\nu^2} h^{[m]}_{n,\nu}\Big(\tau, \frac{a\tau}{m}\Big) \, 
\theta_{\nu+\frac12, m+\frac12}^{(-)}(\tau,0)
\\[3mm]
&=&
g^{[m]}_{n,\nu}(\tau) \, \times \, 
q^{\frac{1}{4m}[(n+a)^2-a^2]} \, \vartheta_{00}(2m\tau, \, (n+a)\tau)
\\[2mm]
& &
+ \,\ (-1)^{\nu} \, 
q^{-\frac{a^2}{4m}} \, 
\Big[
\sum_{\substack{j, \, k \, \in \, \zzz \\[1mm] 
j \, \geq \, k \, \geq \, 0}}
-
\sum_{\substack{j, \, k \, \in \, \zzz \\[1mm] j \, < \, k \, < \, 0}}
\Big] \, 
q^{m \, (j-\nu+\frac{n+a}{2m} )^2 \, - \, m(k-\nu+\frac{n}{2m})^2}
\\[2mm]
& & \hspace{10mm}
\times \,\ 
q^{\frac{1}{4(m+\frac12)}(\nu+\frac12)^2} \, 
\vartheta_{01}(2(m+\tfrac12)\tau, \, (\nu+\tfrac12)\tau)
\end{eqnarray*}}
since
\begin{equation}
\left\{
\begin{array}{lcl}
\theta_{n,m}(\tau, \frac{a\tau}{m}) 
&=& 
q^{\frac{1}{4m}[(n+a)^2-a^2]} \, \vartheta_{00}(2m\tau, \, (n+a)\tau)
\\[3mm]
\theta_{\nu+\frac12, m+\frac12}^{(-)}(\tau,0) 
&=&
q^{\frac{1}{4(m+\frac12)}(\nu+\frac12)^2} \, 
\vartheta_{01}(2(m+\tfrac12)\tau, \, (\nu+\tfrac12)\tau)
\end{array}\right. 
\label{indef3:eqn:2023-1130b3}
\end{equation}
proving \eqref{indef3:eqn:2023-1130b1}. \,\ 
2) follows from 1) and \eqref{indef3:eqn:2023-1125c}.
\end{proof}

\section{The case $\nu=m$}


In this section, we consider the case when $m \in \nnn$ and 
$\nu=m$. In this case, since \, 
$\theta_{m+\frac12,m+\frac12}^{(-)}(\tau,0)=0$, 
the functions $G^{\, [m]}_{n,m}(\tau,z)$ and 
$F^{\, [m](a,b)}_{n,m}(\tau,z)$ are given as follows:
\begin{subequations}
{\allowdisplaybreaks
\begin{eqnarray}
G^{\, [m]}_{n,m}(\tau,z) &=& g^{[m]}_{n,m}(\tau) \, 
\theta_{n,m}(\tau,z)
\label{indef3:eqn:2023-1128b1}
\\[2mm]
F^{\, [m](a,b)}_{n,m}(\tau,z) &=& 
e^{\pi iaz} \, 
q^{\frac{a^2}{4m}} \, 
g^{[m]}_{n,m}(\tau) \, \theta_{n,m}
\Big(\tau, \, z+\frac{a\tau}{m}+\frac{b}{m}\Big)
\label{indef3:eqn:2023-1128b2}
\end{eqnarray}}
\end{subequations}
Then the function 
\begin{equation}
g^{[m]}_{n,m}(\tau) \, = \, 
\Big[\sum_{\substack{j, \, p \, \in \, \zzz \\[1mm]
0 \, < \, p \, \leq \, j}} 
-
\sum_{\substack{j, \, p \, \in \, \zzz \\[1mm]
j \, < \, p \, \leq \, 0}} \Big] 
(-1)^j \,
q^{(m+\frac12)(j+m-\frac12)^2 \, - \, m \, (p+m-\frac{n}{2m})^2} 
\label{indef3:eqn:2023-1203b1}
\end{equation}
is written, by \eqref{indef3:eqn:2023-1128b1} 
and Proposition \ref{indef3:prop:2023-1124a}, as follows:
\begin{subequations}
\begin{equation}
g^{[m]}_{n,m}(\tau)
\, = \,\ 
- \, \frac{(-1)^m}{2m} \cdot 
\frac{\eta(\tau)^3}{\theta_{n,m}(\tau,z)} 
\sum_{\ell \, \in \zzz/2m\zzz} 
e^{\frac{\pi in\ell}{m}} \,\ 
\dfrac{\theta_{m+\frac12, m+\frac12}^{(-)}\Big(\tau, \, z-\dfrac{\ell}{m}\Big)
}{
\theta_{\frac12, \frac12}^{(-)}\Big(\tau, \, z-\dfrac{\ell}{m}\Big)}
\label{indef3:eqn:2023-1203b2}
\end{equation}
The RHS of this equation \eqref{indef3:eqn:2023-1203b2} should be 
independent of $z$, since the LHS is. So, letting $z=0$ in 
\eqref{indef3:eqn:2023-1203b2}, we obtain the following expression 
for $g^{[m]}_{n,m}(\tau)$:
\begin{equation}
g^{[m]}_{n,m}(\tau)
\, = \,\ 
- \, \frac{(-1)^m}{2m} \cdot 
\frac{\eta(\tau)^3}{\theta_{n,m}(\tau,0)} 
\sum_{\ell \, \in \zzz/2m\zzz} 
e^{\frac{\pi in\ell}{m}} \,\ 
\dfrac{\theta_{m+\frac12, m+\frac12}^{(-)}\Big(\tau, \, -\dfrac{\ell}{m}\Big)
}{
\theta_{\frac12, \frac12}^{(-)}\Big(\tau, \, -\dfrac{\ell}{m}\Big)}
\label{indef3:eqn:2023-1203b3}
\end{equation}
\end{subequations}

\medskip

\begin{lemma} 
\label{indef3:lemma:2023-1129a}
For $m \in \nnn$ and $n, a, b \in \zzz$ such that $0 \leq n < 2m$, the 
following formula holds:
{\allowdisplaybreaks
\begin{eqnarray}
& &
F^{\, [m](a,b)}_{n,m}
\Big(-\dfrac{1}{\tau}, \dfrac{z}{\tau}\Big) 
\nonumber
\\[2mm]
& & \hspace{-5mm}
= \,\ 
\dfrac{(-i\tau)^{\frac12}}{\sqrt{2m}}
e^{\frac{\pi inb}{m}} \, 
e^{\frac{\pi iaz}{\tau}} \, 
e^{-\frac{\pi ia^2}{2m\tau}} \, 
e^{\frac{\pi im}{2\tau} (z-\frac{a}{m})^2}
g^{[m]}_{n, \nu}\Big(-\frac{1}{\tau}\Big) \, 
\sum_{k \in \zzz/2m\zzz}
e^{-\frac{\pi i(n+a)k}{m}} \, \theta_{k,m}(\tau, z)
\label{indef3:eqn:2023-1129a1}
\end{eqnarray}}
\end{lemma}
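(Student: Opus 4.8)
The plan is to start from the explicit expression \eqref{indef3:eqn:2023-1128b2} for $F^{\, [m](a,b)}_{n,m}(\tau,z)$, which is available in the present case because $\theta^{(-)}_{m+\frac12,m+\frac12}(\tau,0)=0$ kills the $h$-term in \eqref{indef3:eqn:2023-1124b3}, leaving $G^{\, [m]}_{n,m}(\tau,z)=g^{[m]}_{n,m}(\tau)\,\theta_{n,m}(\tau,z)$. Substituting $(\tau,z)\to(-\frac1\tau,\frac z\tau)$ into \eqref{indef3:eqn:2023-1128b2}, the elementary prefactors transform by $e^{\pi iaz}\to e^{\pi iaz/\tau}$ and $q^{a^2/4m}\to e^{-\pi ia^2/(2m\tau)}$, while $g^{[m]}_{n,m}$ depends on $\tau$ alone and is carried along unchanged as $g^{[m]}_{n,m}(-\frac1\tau)$. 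The argument of the theta factor becomes $\frac1\tau\big(z-\frac am\big)+\frac bm$, so the entire task reduces to transforming the single theta value $\theta_{n,m}\big(-\frac1\tau,\ \frac1\tau(z-\frac am)+\frac bm\big)$.

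Writing $w:=z-\frac am$, I would first peel off the rational shift $\frac bm$ via the quasi-periodicity $\theta_{n,m}(\tau',z'+\frac bm)=e^{\pi inb/m}\,\theta_{n,m}(\tau',z')$ for integer $b$ (Lemma 1.1 in \cite{W2022c}), which produces exactly the prefactor $e^{\pi inb/m}$ on the right-hand side of \eqref{indef3:eqn:2023-1129a1}. Next I would apply the $S$-transformation of the Jacobi theta function (Lemma 1.3 in \cite{W2022c}) to $\theta_{n,m}\big(-\frac1\tau,\frac w\tau\big)$, which yields the normalizing constant $\frac{(-i\tau)^{1/2}}{\sqrt{2m}}$, the Gaussian $e^{\pi imw^2/(2\tau)}=e^{\pi im(z-a/m)^2/(2\tau)}$, and the finite sum $\sum_{k\in\zzz/2m\zzz}e^{-\pi ink/m}\,\theta_{k,m}(\tau,w)$. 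Finally, converting each summand by $\theta_{k,m}(\tau,z-\frac am)=e^{-\pi ika/m}\,\theta_{k,m}(\tau,z)$ (the same quasi-periodicity) merges the two characters into $e^{-\pi i(n+a)k/m}$, giving precisely the sum $\sum_{k\in\zzz/2m\zzz}e^{-\pi i(n+a)k/m}\,\theta_{k,m}(\tau,z)$ appearing in the statement.

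Collecting the accumulated prefactors $\frac{(-i\tau)^{1/2}}{\sqrt{2m}}\,e^{\pi inb/m}\,e^{\pi iaz/\tau}\,e^{-\pi ia^2/(2m\tau)}\,e^{\pi im(z-a/m)^2/(2\tau)}\,g^{[m]}_{n,m}(-\frac1\tau)$ in front of this sum then matches \eqref{indef3:eqn:2023-1129a1} and completes the argument. There is no genuine obstacle here: the proof is purely a matter of bookkeeping the elementary exponential factors through the substitution and of applying the two standard theta identities in the correct order. The only points requiring care are keeping straight which shifts act at modular parameter $-\frac1\tau$ as opposed to $\tau$, and verifying that the Gaussian emitted by the $S$-transformation is in the shifted variable $w=z-\frac am$, so that the completed-square exponent $e^{\pi im(z-a/m)^2/(2\tau)}$ comes out exactly as claimed.
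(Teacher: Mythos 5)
Your proposal is correct and follows essentially the same route as the paper: both start from the reduced expression \eqref{indef3:eqn:2023-1128b2}, substitute $(\tau,z)\to(-\frac1\tau,\frac z\tau)$, and apply the elliptic quasi-periodicity and $S$-transformation of $\theta_{n,m}$ (Lemmas 1.1--1.3 of \cite{W2022c}). The only difference is bookkeeping order --- you strip the shift $\frac bm$ before applying the $S$-transformation, while the paper carries it through as $\frac{b\tau}{m}$ and removes it afterwards via the index shift $k\mapsto k+b$ --- and both yield \eqref{indef3:eqn:2023-1129a1}.
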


\begin{proof} We compute the $S$-transformation of 
$F^{\, [m](a,b)}_{n,m}(\tau,z)$ by using the formula 
\eqref{indef3:eqn:2023-1128b2} 
and Lemmas 1.2 and 1.3 in \cite{W2022c} as follows: 
{\allowdisplaybreaks
\begin{eqnarray*}
& & \hspace{-10mm}
F^{\, [m](a,b)}_{n,m}\Big(-\frac{1}{\tau}, \frac{z}{\tau}\Big) 
= \, 
e^{\frac{\pi iaz}{\tau}} \, 
e^{-\frac{2\pi i}{\tau} \cdot \frac{1}{4m}a^2} \, 
g^{[m]}_{n, m}\Big(-\frac{1}{\tau}\Big) \, 
\theta_{n,m}\Big(-\frac{1}{\tau}, 
\frac{z}{\tau}-\frac{a}{m\tau}+\frac{b}{m} 
\Big)
\\[2mm]
&=&
\frac{(-i\tau)^{\frac12}}{\sqrt{2m}}
e^{\frac{\pi iaz}{\tau}} \, 
e^{-\frac{\pi ia^2}{2m\tau}} \, 
e^{\frac{\pi im}{2\tau} (z-\frac{a}{m})^2} \, 
e^{\pi ib(z-\frac{a}{m})} \, 
e^{\frac{\pi ib^2\tau}{2m}} \, 
g^{[m]}_{n, m}\Big(-\frac{1}{\tau}\Big) \, 
\\[2mm]
& & \hspace{10mm}
\times \,\ 
\sum_{k \in \zzz/2m\zzz}
e^{-\frac{\pi ink}{m}} \, 
\theta_{k,m}\Big(\tau, z-\frac{a}{m}+\frac{b\tau}{m}\Big)
\\[2mm]
&=&
\frac{(-i\tau)^{\frac12}}{\sqrt{2m}}
e^{\frac{\pi iaz}{\tau}} \, 
e^{-\frac{\pi ia^2}{2m\tau}} \, 
e^{\frac{\pi im}{2\tau} (z-\frac{a}{m})^2} \, 
g^{[m]}_{n, m}\Big(-\frac{1}{\tau}\Big) \, 
\sum_{k \in \zzz/2m\zzz}
e^{-\frac{\pi ink}{m}} \, e^{-\frac{\pi ia(k+b)}{m}} \, 
\theta_{k+b,m}(\tau, z)
\end{eqnarray*}}
Then, putting $k+b=:k'$, we obtain the formula \eqref{indef3:eqn:2023-1129a1}.
\end{proof}

\medskip

\begin{lemma}
\label{indef3:lemma:2023-1129b}
For $m \in \nnn$ and $n \in \zzz$ such that $0 \leq n < 2m$, the 
following formula holds:
\begin{equation}
{ } \hspace{-10mm}
g^{[m]}_{n, m}
\Big(-\dfrac{1}{\tau}\Big) \, \theta_{0,m}(\tau, z)
\, = \, 
\dfrac{(-1)^{m-1} \, \tau}{\sqrt{2m(2m+1)}} 
\sum\limits_{n'=0}^{2m-1}
\sum\limits_{\nu'=0}^{2m}
e^{\frac{\pi inn'}{m}} \, F^{\, [m](-n', 0)}_{n',\nu'}(\tau,z)
\label{indef3:eqn:2023-1129a2}
\end{equation}
\end{lemma}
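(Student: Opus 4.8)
The plan is to derive two different expressions for the single quantity $F^{\, [m](a,0)}_{n,m}(-\tfrac{1}{\tau},\tfrac{z}{\tau})$ and to compare them. On one hand, Lemma \ref{indef3:lemma:2023-1129a} (taken with $b=0$) writes this as a scalar multiple of $g^{[m]}_{n,m}(-\tfrac{1}{\tau})\sum_{k\in\zzz/2m\zzz}e^{-\frac{\pi i(n+a)k}{m}}\theta_{k,m}(\tau,z)$. On the other hand, specializing the general $S$-transformation formula \eqref{indef3:eqn:2023-1128a2} of Proposition \ref{indef3:prop:2023-1128a} to $\nu=m$ expresses the same quantity as a triple sum over $n',\nu',\ell$ of the functions $F^{\, [m](\ell,0)}_{n',\nu'}(\tau,z)$. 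Both formulas carry the common prefactor $\frac{(-i\tau)^{1/2}}{\sqrt{2m}}\,e^{\frac{\pi iaz}{\tau}}\,e^{-\frac{\pi ia^2}{2m\tau}}\,e^{\frac{\pi im}{2\tau}(z-\frac{a}{m})^2}$, and after cancelling it the second prefactor collapses, since $\frac{i(-i\tau)^{3/2}}{2m\sqrt{2m+1}}$ divided by $\frac{(-i\tau)^{1/2}}{\sqrt{2m}}$ equals $\frac{\tau}{\sqrt{2m(2m+1)}}$.

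First I would record the resulting identity, in which the variable $a$ now occurs only inside the discrete phases $e^{-\frac{\pi i(n+a)k}{m}}$ on the left and $e^{-\frac{\pi in'a}{m}}e^{-\frac{\pi i(n+a)\ell}{m}}$ on the right. The decisive step is then to sum this identity over $a\in\zzz/2m\zzz$ and to apply the orthogonality relation $\sum_{a=0}^{2m-1}e^{-\frac{\pi iak}{m}}=2m\,\delta_{k,0}$, the same device already used in Proposition \ref{indef3:prop:2023-1124a}. On the left this annihilates every term except $k=0$, producing exactly $2m\,g^{[m]}_{n,m}(-\tfrac{1}{\tau})\,\theta_{0,m}(\tau,z)$, that is $2m$ times the left-hand side of \eqref{indef3:eqn:2023-1129a2}. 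On the right the combined phase $e^{-\frac{\pi ia(n'+\ell)}{m}}$ forces $\ell\equiv-n'\pmod{2m}$, so only the term $\ell=-n'$ survives, and its residual phase $e^{-\frac{\pi in\ell}{m}}$ evaluated at $\ell=-n'$ equals $e^{\frac{\pi inn'}{m}}$, precisely the phase appearing in \eqref{indef3:eqn:2023-1129a2}.

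It remains to simplify the $\nu'$-dependent constant $(-1)^{m+\nu'}\,e^{-\frac{2\pi i}{2m+1}(m+\frac12)(\nu'+\frac12)}$ produced by the $\nu=m$ specialization. The crucial arithmetic observation is that $\frac{2(m+\frac12)}{2m+1}=1$, so this phase reduces to $(-1)^{m+\nu'}e^{-\pi i(\nu'+\frac12)}$, which is in fact independent of $\nu'$; it may therefore be pulled out of the double sum over $n',\nu'$, leaving precisely $\sum_{n'=0}^{2m-1}\sum_{\nu'=0}^{2m}e^{\frac{\pi inn'}{m}}F^{\, [m](-n',0)}_{n',\nu'}(\tau,z)$. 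Combining this constant with the prefactor $\frac{\tau}{\sqrt{2m(2m+1)}}$ and dividing the whole equality by $2m$ then gives \eqref{indef3:eqn:2023-1129a2}. I expect the main obstacle to be the constant bookkeeping: one must track the square-root and $i$ factors in the two competing prefactors and the collapse of the $\nu'$-phase with care, since it is exactly these cancellations that must conspire to yield the single scalar $\frac{(-1)^{m-1}\tau}{\sqrt{2m(2m+1)}}$; by contrast the orthogonality summations over $a$ are routine once the identity has been assembled.
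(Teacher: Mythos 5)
Your strategy is the same as the paper's: equate the expression for $F^{\,[m](a,0)}_{n,m}(-\tfrac{1}{\tau},\tfrac{z}{\tau})$ coming from Lemma \ref{indef3:lemma:2023-1129a} with the $\nu=m$ specialization of Proposition \ref{indef3:prop:2023-1128a}, cancel the common $a$-dependent Gaussian prefactor, and then average over $a\in\zzz/2m\zzz$ so that orthogonality kills all $k\neq 0$ on one side and forces $\ell\equiv -n'$ on the other. (The paper works from \eqref{indef3:eqn:2023-1128a1} together with Note \ref{indef3:note:2023-1128a}, whereas you invoke \eqref{indef3:eqn:2023-1128a2} directly; since the latter is derived from the former, this is not a substantive difference.) The structure of the argument is sound and matches the paper's.

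The one point you defer --- the constant bookkeeping --- does not close in the way you assert. Your own intermediate quantities are correct: the $\nu'$-phase at $\nu=m$ is $(-1)^{m+\nu'}e^{-\pi i(\nu'+\frac12)}=(-1)^{m+\nu'}(-1)^{\nu'}(-i)=-i(-1)^m$, and the ratio of the two Gaussian prefactors is $\tau/\sqrt{2m(2m+1)}$. Their product is $-i(-1)^m\,\tau/\sqrt{2m(2m+1)}$, which differs from the asserted $(-1)^{m-1}\tau/\sqrt{2m(2m+1)}$ by a factor of $i$. The same factor of $i$ silently disappears in the paper's own proof in the passage from \eqref{indef3:eqn:2023-1128a1} to \eqref{indef3:eqn:2023-1129b1}, where the coefficient $\frac{i(-i\tau)^{3/2}}{2m\sqrt{2m+1}}\cdot(-i)(-1)^m=(-1)^m\frac{(-i\tau)^{3/2}}{2m\sqrt{2m+1}}$ is recorded as $-i(-1)^m\frac{(-i\tau)^{3/2}}{2m\sqrt{2m+1}}$. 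So you should not expect the constants to ``conspire'' to give \eqref{indef3:eqn:2023-1129a2} exactly as printed: either a compensating $i$ must be located in one of the inputs (Lemma \ref{indef3:lemma:2023-1129a} or Proposition \ref{indef3:prop:2023-1128a}), or the overall constant in the statement needs rechecking. Apart from this normalization issue, which your write-up shares with the paper, your argument reproduces the paper's proof.
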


\begin{proof} Letting $\nu=m$ in \eqref{indef3:eqn:2023-1128a1}, 
we have
{\allowdisplaybreaks
\begin{eqnarray}
& & \hspace{-10mm}
F^{\, [m](a,b)}_{n,m}\Big(-\frac{1}{\tau}, \frac{z}{\tau}\Big)
= \,\ 
- \, i \, (-1)^m \, 
\frac{(-i\tau)^{\frac32}}{2m\sqrt{2m+1}} \, 
e^{\frac{\pi iaz}{\tau}} \, 
e^{-\frac{\pi ia^2}{2m\tau}} \, 
e^{\frac{\pi im}{2\tau}(z-\frac{a}{m})^2} 
\sum_{n'=0}^{2m-1}
\sum_{\nu'=0}^{2m}
\sum_{\ell \, \in \, \zzz/2m\zzz} 
\nonumber
\\[0mm]
& & \hspace{30mm}
\times \,\ 
e^{\frac{\pi in(b-\ell)}{m}} \, 
e^{-\frac{\pi i\ell a}{m}} \, 
F^{\, [m](\ell, -a)}_{n',\nu'}(\tau,z)
\label{indef3:eqn:2023-1129b1}
\end{eqnarray}}
The, by \eqref{indef3:eqn:2023-1129a1} and 
\eqref{indef3:eqn:2023-1129b1} and Note 
\ref{indef3:note:2023-1128a}, we have
{\allowdisplaybreaks
\begin{eqnarray}
& &
g^{[m]}_{n, m}\Big(-\frac{1}{\tau}\Big) \, 
\sum_{k \in \zzz/2m\zzz}
e^{-\frac{\pi i(n+a)k}{m}} \, \theta_{k,m}(\tau, z)
\nonumber
\\[2mm]
&=&
(-1)^{m-1} \, 
\frac{\tau}{\sqrt{2m(2m+1)}} \,\ 
\sum_{n'=0}^{2m-1}
\sum_{\nu'=0}^{2m} \, 
\sum_{\ell \, \in \, \zzz/2m\zzz} 
e^{-\frac{\pi in\ell}{m}} \, 
e^{-\frac{\pi i\ell a}{m}} \, 
F^{\, [m](\ell, -a)}_{n',\nu'}(\tau,z)
\nonumber
\\[2mm]
&=&
(-1)^{m-1} \, 
\frac{\tau}{\sqrt{2m(2m+1)}} \,\ 
\sum_{n'=0}^{2m-1}
\sum_{\nu'=0}^{2m} \, 
\sum_{\ell \, \in \, \zzz/2m\zzz} 
e^{-\frac{\pi in\ell}{m}} \, 
e^{-\frac{\pi i(\ell+n') a}{m}} \, 
F^{\, [m](\ell, 0)}_{n',\nu'}(\tau,z)
\label{indef3:eqn:2023-1129b2}
\end{eqnarray}}
Then, applying \, $\sum_{a \in \zzz/2m\zzz}$ to 
\eqref{indef3:eqn:2023-1129b2}, we obtain
$$
g^{[m]}_{n, m}\Big(-\frac{1}{\tau}\Big) \, \theta_{0,m}(\tau, z)
\, = \, 
(-1)^{m-1} \, \frac{\tau}{\sqrt{2m(2m+1)}} 
\sum_{n'=0}^{2m-1}
\sum_{\nu'=0}^{2m}
e^{\frac{\pi inn'}{m}} \, F^{\, [m](-n', 0)}_{n',\nu'}(\tau,z)
$$
proving \eqref{indef3:eqn:2023-1129a2}.
\end{proof}

\medskip

We note that the function $F^{\, [m](-n,0)}_{n,\nu}(\tau,0)$ 
has the following expression:

\medskip

\begin{note}  
\label{indef3:note:2023-1202a}
Let $m \in \frac12 \nnn$ and $n,n',\nu, \nu'$ be integers satisfying
the condition \eqref{indef3:eqn:2023-1127b1}. Then
{\allowdisplaybreaks
\begin{eqnarray}
& & \hspace{-7mm}
F^{\, [m](-n,0)}_{n,\nu}(\tau,0) 
\,\ = \,\ 
\dfrac{\eta(2m\tau)^5}{\eta(m\tau)^2\eta(4m\tau)^2} \, 
g^{[m]}_{n,\nu}(\tau)
\nonumber
\\[2mm]
& & \hspace{-5mm}
+ \, (-1)^{\nu}  
q^{\frac{1}{4(m+\frac12)}(\nu+\frac12)^2} 
\vartheta_{01}(2(m+\tfrac12)\tau, (\nu+\tfrac12)\tau) 
\Big[
\sum_{\substack{j, \, k \, \in \, \zzz \\[1mm] 
j \, \geq \, k \, \geq \, 0}}
-
\sum_{\substack{j, \, k \, \in \, \zzz \\[1mm] j \, < \, k \, < \, 0}}
\Big] \, 
q^{m (j-\nu)^2 - m(k-\nu+\frac{n}{2m})^2}
\nonumber
\\[-7mm]
& &
\label{indef3:eqn:2023-1202a1}
\end{eqnarray}}
\end{note}

\begin{proof} This formula follows immediately from 
\eqref{indef3:eqn:2023-1130b2} and 
$\vartheta_{00}(2m\tau,0) \, = \, 
\dfrac{\eta(2m\tau)^5}{\eta(m\tau)^2\eta(4m\tau)^2}
$ \, .
\end{proof}

\medskip


\begin{prop}  
\label{indef3:prop:2023-1202a}
For $m \in \nnn$ and $n \in \zzz$ such that $0 \leq n < 2m$, the 
$S$-transformation of the function $g^{[m]}_{n, m}(\tau)$ is given 
by the following formula:
\begin{subequations}
{\allowdisplaybreaks
\begin{eqnarray}
& & \hspace{-15mm}
g^{[m]}_{n, m}\Big(-\frac{1}{\tau}\Big)
\, = \, 
\frac{(-1)^{m-1} \, \tau}{\sqrt{2m(2m+1)}} \cdot 
\frac{\eta(m\tau)^2\eta(4m\tau)^2}{\eta(2m\tau)^5}
\sum_{n'=0}^{2m-1}
\sum_{\nu'=0}^{2m}
e^{\frac{\pi inn'}{m}} \, F^{\, [m](-n', 0)}_{n',\nu'}(\tau,0)
\label{indef3:eqn:2023-1202b1}
\\[2mm]
& & \hspace{-6.5mm}
= \,\ - \,\ 
\frac{\tau}{\sqrt{2m(2m+1)}} \, \Bigg\{
(-1)^m 
\sum_{n'=0}^{2m-1}
\sum_{\nu'=0}^{2m}
e^{\frac{\pi inn'}{m}} \, g^{[m]}_{n',\nu'}(\tau)
\nonumber
\\[2mm]
& & 
+ \,\ \frac{\eta(m\tau)^2\eta(4m\tau)^2}{\eta(2m\tau)^5}  
\sum_{n'=0}^{2m-1}
\sum_{\nu'=0}^{2m}
(-1)^{m+\nu'} \, e^{\frac{\pi inn'}{m}} \, 
q^{\frac{1}{4(m+\frac12)}(\nu'+\frac12)^2} 
\nonumber
\\[2mm]
& & \hspace{5mm}
\times \,\ 
\vartheta_{01}(2(m+\tfrac12)\tau, (\nu'+\tfrac12)\tau) \, 
\Big[
\sum_{\substack{j, \, k \, \in \, \zzz \\[1mm] 
j \, \geq \, k \, \geq \, 0}}
-
\sum_{\substack{j, \, k \, \in \, \zzz \\[1mm] j \, < \, k \, < \, 0}}
\Big] \, 
q^{m (j-\nu')^2 - m(k-\nu'+\frac{n'}{2m})^2} \Bigg\}
\label{indef3:eqn:2023-1202b2}
\end{eqnarray}}
\end{subequations}
\end{prop}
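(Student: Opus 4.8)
The plan is to derive both displayed formulas directly from Lemma~\ref{indef3:lemma:2023-1129b} by specializing $z=0$ and then inserting the explicit expression for $F^{\,[m](-n',0)}_{n',\nu'}(\tau,0)$ recorded in Note~\ref{indef3:note:2023-1202a}. Since all the genuine analytic content -- the $S$-transformation itself -- already resides in Lemma~\ref{indef3:lemma:2023-1129b}, what remains is an algebraic reorganization together with careful bookkeeping of the $\eta$-quotients and of the sign $(-1)^{m-1}$.

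First I would set $z=0$ in the identity \eqref{indef3:eqn:2023-1129a2} of Lemma~\ref{indef3:lemma:2023-1129b}. The left-hand side becomes $g^{[m]}_{n,m}\big(-\frac{1}{\tau}\big)\,\theta_{0,m}(\tau,0)$, and I would evaluate the theta value using the specialization \eqref{indef3:eqn:2023-1130b3} at $(n,a)=(0,0)$, which gives $\theta_{0,m}(\tau,0)=\vartheta_{00}(2m\tau,0)$, together with the product identity $\vartheta_{00}(2m\tau,0)=\dfrac{\eta(2m\tau)^5}{\eta(m\tau)^2\eta(4m\tau)^2}$ already used in the proof of Note~\ref{indef3:note:2023-1202a}. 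Dividing through by this nonvanishing factor isolates $g^{[m]}_{n,m}\big(-\frac{1}{\tau}\big)$ and reproduces \eqref{indef3:eqn:2023-1202b1}, with the reciprocal quotient $\dfrac{\eta(m\tau)^2\eta(4m\tau)^2}{\eta(2m\tau)^5}$ appearing as an overall prefactor.

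To pass from \eqref{indef3:eqn:2023-1202b1} to the fully explicit form \eqref{indef3:eqn:2023-1202b2}, I would insert formula \eqref{indef3:eqn:2023-1202a1} of Note~\ref{indef3:note:2023-1202a} (with $(n,\nu)$ replaced by $(n',\nu')$) into each summand $F^{\,[m](-n',0)}_{n',\nu'}(\tau,0)$. That expression splits as a $g^{[m]}_{n',\nu'}(\tau)$-term carrying the factor $\dfrac{\eta(2m\tau)^5}{\eta(m\tau)^2\eta(4m\tau)^2}$, plus a second term built from $\vartheta_{01}$ and the indefinite double sum. When multiplied by the outer reciprocal $\eta$-quotient, the first term's $\eta$-factors cancel exactly, leaving the pure $g^{[m]}_{n',\nu'}(\tau)$-sum, while the second term retains the reciprocal quotient. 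Rewriting $(-1)^{m-1}=-(-1)^m$ and absorbing the $(-1)^{\nu'}$ of \eqref{indef3:eqn:2023-1202a1} into $(-1)^{m+\nu'}$ then yields exactly \eqref{indef3:eqn:2023-1202b2}.

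I expect no genuine obstacle here: the proposition is essentially a transcription of Lemma~\ref{indef3:lemma:2023-1129b} combined with Note~\ref{indef3:note:2023-1202a}. The only point requiring care is the sign and $\eta$-quotient bookkeeping -- in particular ensuring that the $\eta$-factors cancel in the $g^{[m]}_{n',\nu'}(\tau)$-term but survive in the $\vartheta_{01}$-term, and that the global $(-1)^{m-1}$ is split correctly so that the two pieces inside the brace in \eqref{indef3:eqn:2023-1202b2} carry the intended signs $(-1)^m$ and $(-1)^{m+\nu'}$ respectively.
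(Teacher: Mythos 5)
Your proposal is correct and follows exactly the paper's own route: specialize $z=0$ in Lemma \ref{indef3:lemma:2023-1129b}, divide by $\theta_{0,m}(\tau,0)$ via the Gauss product formula to get \eqref{indef3:eqn:2023-1202b1}, then substitute \eqref{indef3:eqn:2023-1202a1} and track the signs and $\eta$-quotients to get \eqref{indef3:eqn:2023-1202b2}. The sign bookkeeping you describe ($(-1)^{m-1}=-(-1)^m$, cancellation of $\eta$-factors only in the $g$-term) checks out.
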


\begin{proof} The formula \eqref{indef3:eqn:2023-1202b1} is obtained 
immediately by letting $z=0$ in \eqref{indef3:eqn:2023-1129a2} and 
using the Gauss' formula in Exercise 12.4 in \cite{K1} : \quad $
\theta_{0,m}(\tau,0) \, = \, 
\dfrac{\eta(2m\tau)^5}{\eta(m\tau)^2\eta(4m\tau)^2}$ \, .

\noindent
The formula \eqref{indef3:eqn:2023-1202b2} follows from 
\eqref{indef3:eqn:2023-1202b1} and \eqref{indef3:eqn:2023-1202a1}.
\end{proof}

\end{document}